\newtheorem{theorem}{Theorem}
\newtheorem{lemma}[theorem]{Lemma}
\newtheorem{conjecture}[theorem]{Conjecture}
\title[Disproving two conjectures on the Hamiltonicity of Venn diagrams]{Disproving two conjectures on the Hamiltonicity \\ of Venn diagrams}
\author{Sofia Brenner}
\address[Sofia Brenner]{Institut f\"ur Mathematik, Universit\"at Kassel, Germany}
\email{sbrenner@mathematik.uni-kassel.de}
\author{Linda Kleist}
\address[Linda Kleist]{Institut für Informatik und Computational Science, Universität Potsdam, Germany}
\email{kleist@cs.uni-potsdam.de}
\author{Torsten M\"utze}
\address[Torsten M\"utze]{Institut f\"ur Mathematik, Universit\"at Kassel, Germany}
\email{tmuetze@mathematik.uni-kassel.de}
\author{Christian Rieck}
\address[Christian Rieck]{Institut f\"ur Mathematik, Universit\"at Kassel, Germany}
\email{christian.rieck@mathematik.uni-kassel.de}
\author{Francesco Verciani}
\address[Francesco Verciani]{Institut f\"ur Mathematik, Universit\"at Kassel, Germany}
\email{francesco.verciani@mathematik.uni-kassel.de}
\thanks{Sofia Brenner, Torsten M\"utze, Christian Rieck and Francesco Verciani were supported by German Science Foundation grant~522790373.}
\begin{document}

\begin{abstract}
In 1984, Winkler conjectured that every simple Venn diagram with $n$ curves can be extended to a simple Venn diagram with $n+1$ curves.
His conjecture is equivalent to the statement that the dual graph of any simple Venn diagram has a Hamilton cycle.
In this work, we construct counterexamples to Winkler's conjecture for all $n\geq 6$.
As part of this proof, we computed all 3.430.404 simple Venn diagrams with $n=6$ curves (even their number was not previously known), among which we found 72 counterexamples.
We also construct monotone Venn diagrams, i.e., diagrams that can be drawn with $n$ convex curves, and are not extendable, for all $n\geq 7$.

Furthermore, we also disprove another conjecture about the Hamiltonicity of the (primal) graph of a Venn diagram.
Specifically, while working on Winkler's conjecture, Pruesse and Ruskey proved that this graph has a Hamilton cycle for every simple Venn diagram with $n$~curves, and conjectured that this also holds for non-simple diagrams.
We construct counterexamples to this conjecture for all $n\geq 4$.
\end{abstract}

\maketitle

\section{Introduction}
\label{sec:intro}

Venn diagrams are a popular tool to illustrate the relations between sets and operations on them, such as union and intersection, or the well-known inclusion-exclusion principle.
Historically, these diagrams were introduced by John Venn~\cite{venn_1880} (1834--1923) in the context of formal logic.
Most illustrations that one finds depict the Venn diagram with three sets represented by unit circles; see Figure~\ref{fig:345}~(a).
It may even come as a surprise that Venn diagrams exist for any number of sets, not just three.
Formally, an \defi{$n$-Venn diagram} is a collection of $n$ simple closed curves in the plane that intersect in only finitely many points and create exactly $2^n$ regions, one for every possible combination of being inside or outside of each of the $n$ curves.
Figures~\ref{fig:345}~(b)+(c) display diagrams with $n=4$ and $n=5$ curves, respectively.

A Venn diagram is \defi{simple} if at most two of the $n$ curves intersect in any point:
The three diagrams in the top row of Figure~\ref{fig:345} are simple, whereas the three diagrams in the bottom row are non-simple.
An easy application of Euler's formula shows that a simple Venn diagram has exactly $2^n-2$ crossings, whereas non-simple diagrams may have much fewer.
For humans, simple Venn diagrams tend to be easier to read than non-simple diagrams.

\begin{figure}[t]
\centerline{
\includegraphics[page=1]{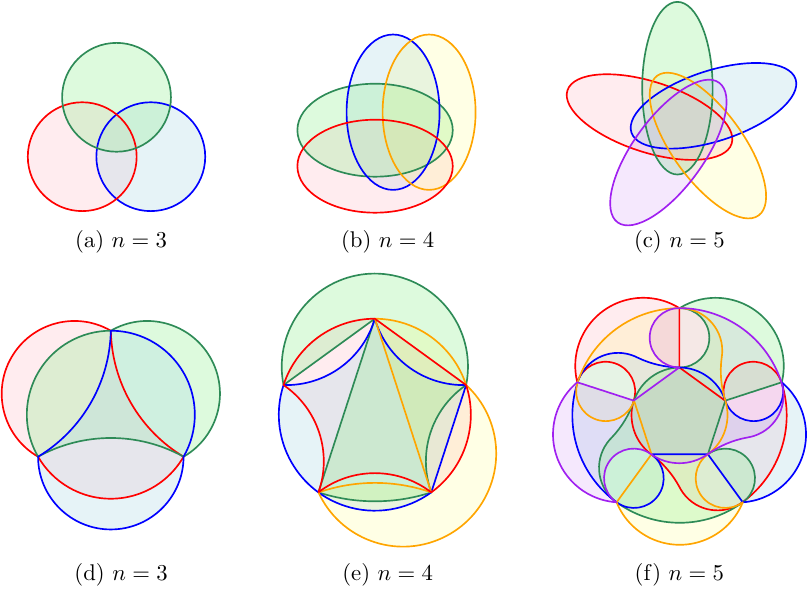}
}
\caption{Venn diagrams with $n=3,4,5$ curves:
(a), (b), (c) are simple, whereas (d), (e), (f) are non-simple;
%(a), (c), (d) and (f) are symmetric, whereas (b) and (e) are non-symmetric;
(a), (b) and (d) are reducible, whereas (c), (e) and (f) are irreducible;
(a), (b), (c) and (d) are monotone, whereas (e) and (f) are not monotone;
(a), (b), (c), (d) and (f) are exposed, whereas (e) is not exposed.}
\label{fig:345}
\end{figure}

An interesting subclass of Venn diagrams are those that can be drawn with $n$ convex curves.
This property can be captured purely combinatorially, as follows:
A \defi{$k$-region} in an $n$-Venn diagram is a region that lies inside of exactly $k$ of the $n$ curves (and outside of the remaining $n-k$ curves).
A~\defi{monotone} Venn diagram is one in which for every $0<k<n$, every $k$-region is adjacent to both a $(k-1)$-region and a $(k+1)$-region.
Bultena, Gr{\"{u}}nbaum and Ruskey~\cite{DBLP:conf/cccg/BultenaGR99} proved that monotone diagrams are precisely the ones that can be drawn such that every curve has a convex shape.
Formally, every diagram made of convex curves is monotone, and every monotone diagram can be convexified.
Monotone diagrams can be realized as a \defi{wire diagram}, where the $n$ curves are drawn as horizontal lines at heights $1,2,\ldots,n$, and each crossing between neighboring curves occurs within a narrow vertical strip.
Furthermore, the diagram implicitly wraps around cyclically at the left and right boundary; see Figure~\ref{fig:wire}.
Note that a convex realization can be obtained from the wire diagram by placing it in a very thin annulus with large enough radius.

\begin{figure}[h!]
\includegraphics[scale=0.7]{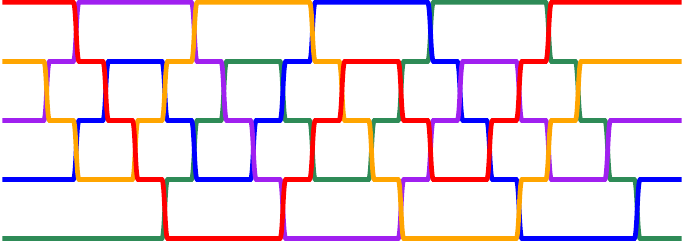}
\caption{Representation of the 5-Venn diagram from Figure~\ref{fig:345}~(c) as a wire diagram.}
\label{fig:wire}
\end{figure}

\subsection{Winkler's conjecture}

In 1984, Peter Winkler~\cite{MR777726} raised the following intriguing conjecture about extending a simple Venn diagram by adding one curve to it.

\begin{conjecture}
\label{conj:winkler}
Every simple $n$-Venn diagram can be extended to a simple $(n+1)$-Venn diagram by the addition of a suitable curve.
\end{conjecture}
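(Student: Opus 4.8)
The plan is to attack the conjecture through the standard reformulation in terms of the dual graph. Given a simple $n$-Venn diagram $D$, let $G(D)$ be its \emph{dual graph}, whose vertices are the $2^n$ regions and whose edges join two regions that share a boundary arc. I would first record the equivalence that drives everything: $D$ can be extended to a simple $(n+1)$-Venn diagram by adding a single curve if and only if $G(D)$ has a Hamilton cycle. The forward direction is transparent, since a newly added curve must split each of the $2^n$ regions into an ``inside'' part and an ``outside'' part so as to produce exactly $2^{n+1}$ regions; to do this as one simple closed curve it has to pass through the regions in a cyclic order, crossing from each region into a neighbour through a shared arc and visiting every region exactly once (visiting a region twice would cut it into three parts, skipping one would leave it undivided). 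The cyclic sequence of regions traversed is precisely a Hamilton cycle of $G(D)$, and conversely any Hamilton cycle can be realized geometrically by routing a curve through the corresponding sequence of shared arcs. Thus it suffices to prove that $G(D)$ is Hamiltonian for every simple Venn diagram $D$.

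Next I would exploit the combinatorial structure of $G(D)$. Labelling each region by the set of curves whose interior contains it gives a bijection between the regions and the subsets of $[n]=\{1,\dots,n\}$, i.e.\ the vertices of the hypercube $Q_n$; this is exactly the defining feature of a Venn diagram. Crossing a single boundary arc toggles membership in exactly one curve, so adjacent regions differ in exactly one coordinate, and hence $G(D)$ is a spanning subgraph of $Q_n$. In addition, $G(D)$ is planar (it is the dual of a plane subdivision), connected, and bipartite. The goal then reduces to showing that the particular spanning subgraphs of $Q_n$ arising from Venn diagrams are always Hamiltonian.

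For the Hamiltonicity step I would pursue two complementary routes. The first is a sufficient-condition approach: verify that $G(D)$ is $4$-connected and invoke Tutte's theorem that every $4$-connected planar graph is Hamiltonian; since planarity is automatic, everything would reduce to a connectivity estimate on the region-adjacency structure. The second, for use where the connectivity bound is too weak — notably near the outer region $\emptyset$ and the central region $[n]$, which tend to have small degree — is an inductive construction: fix an ordering of the curves, track how a Hamilton cycle of a subdiagram is modified as the arcs of one further curve are revealed, and patch the cycle locally at each new crossing. The cases $n\le 3$ would be checked directly to anchor the induction.

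The hard part, I expect, is exactly this Hamiltonicity step, and more precisely controlling the local structure around low-degree regions. Being a spanning subgraph of $Q_n$ is far from sufficient for Hamiltonicity on its own, and the planar dual of a Venn diagram need not be $4$-connected, so neither route closes automatically; the crux is to prove that the \emph{global} Venn structure forbids the small vertex separators, or the unbalanced bipartition, that would obstruct a Hamilton cycle. Establishing such obstruction-freeness purely from the defining property of Venn diagrams is the main technical hurdle I would have to overcome, and it is the point at which I would expect the argument to be genuinely delicate.
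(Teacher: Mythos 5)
Your reduction is the correct and standard one---it matches the paper's own translation of the problem: extending $D$ by one curve is equivalent to a Hamilton cycle in the dual graph, which is a spanning subgraph of $Q_n$ (indeed a plane quadrangulation, an $n$-Venn quadrangulation in the paper's terminology). But the statement you are trying to prove is \emph{false}, and the paper is a disproof, not a proof: Theorem~\ref{thm:counter1} constructs, for every $n\geq 6$, a simple $n$-Venn diagram whose dual has no perfect matching and hence no Hamilton cycle. The ``main technical hurdle'' you correctly isolate---showing that the global Venn structure excludes an unbalanced-bipartition obstruction---is not merely delicate but impossible: the paper exhibits a 6-Venn quadrangulation containing 12 vertices of one colour class whose joint neighbourhood has only 11 vertices (Figure~\ref{fig:counter1}), violating Hall's condition, and then propagates this obstacle subgraph to all larger $n$ by a doubling/gluing construction (Lemma~\ref{lem:ladder}). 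An exhaustive computation (Theorem~\ref{thm:6}) shows that exactly 72 of the 3.430.404 simple 6-Venn diagrams are non-extendable, 17 of them already lacking a perfect matching; a monotone counterexample exists from $n=7$ on (Theorem~\ref{thm:counter1-monotone}).

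Both of your proposed routes for the Hamiltonicity step fail for concrete structural reasons. The Tutte route is dead on arrival: a Venn quadrangulation has $2^n$ vertices and all faces of length~4, so by Euler's formula it has $2^{n+1}-4$ edges, average degree strictly below~4, hence minimum degree at most~3, and it is therefore \emph{never} 4-connected. (Tutte's theorem is the right tool for the \emph{primal} graph of a simple Venn diagram, which is how Pruesse and Ruskey proved Hamiltonicity there; note that their analogous conjecture for non-simple diagrams is also disproved in this paper, Theorem~\ref{thm:counter2}, again via a parity/independent-set obstruction to a perfect matching.) The inductive route is equally hopeless as stated: all simple $n$-Venn diagrams with $n\leq 5$ are extendable, so any induction anchored at small cases must break between $n=5$ and $n=6$, exactly where the counterexamples first appear; moreover the natural induction step would itself require extending an $n$-diagram to an $(n+1)$-diagram, i.e., the conjecture itself. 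The only salvageable outputs of your proposal are the equivalence with dual Hamiltonicity and the observation that reducible diagrams are always extendable---both of which the paper uses, but in the service of the opposite conclusion.
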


He reiterated the problem in his `Puzzled' column~\cite{DBLP:journals/cacm/Winkler12} in the Communications of the ACM.
The conjecture is also listed as the first open problem in Ruskey and Weston's inspiring survey~\cite{MR1668051} on Venn diagrams.

Concerning small cases, the conjecture is easily verifiable for all simple Venn diagrams on $n\in\{3,4,5\}$ curves, simply because there are so few of them.
Another class of Venn diagrams for which the conjecture is easily seen to be true are reducible diagrams.
An $n$-Venn diagram is \defi{reducible} if one of its curves can be removed so that the remaining $n-1$ curves form an $(n-1)$-Venn diagram.
Otherwise the diagram is called \defi{irreducible}.
The diagram in Figure~\ref{fig:345}~(c) is irreducible, because any 4 of the 5 curves create 20 regions instead of 16.
If a diagram is reducible, then the curve~$C$ that can be removed splits every region of the resulting $(n-1)$-Venn diagram into two, and therefore this curve touches every region in the $n$-Venn diagram.
We can thus add an $(n+1)$st curve by following the curve~$C$ and subdividing every region into two, changing sides along $C$ exactly once along every segment between two consecutive crossings.
Thus, the interesting instances of Conjecture~\ref{conj:winkler} are irreducible diagrams.

Gr{\"u}nbaum~\cite{MR1208440} modified Winkler's conjecture, dropping the requirement for the diagrams to be simple.
This makes the problem harder in the sense that more diagrams are allowed, but easier in the sense that there is substantially more freedom when adding the new curve.
This variant of the conjecture was proved subsequently by Chilakamarri, Hamburger and Pippert~\cite{MR1399681}, by using a straightforward reduction to a classical result of Whitney~\cite{MR1503003}.

\subsubsection{Translation to a Hamilton cycle problem}

\begin{figure}[b!]
\centerline{
\includegraphics[page=4]{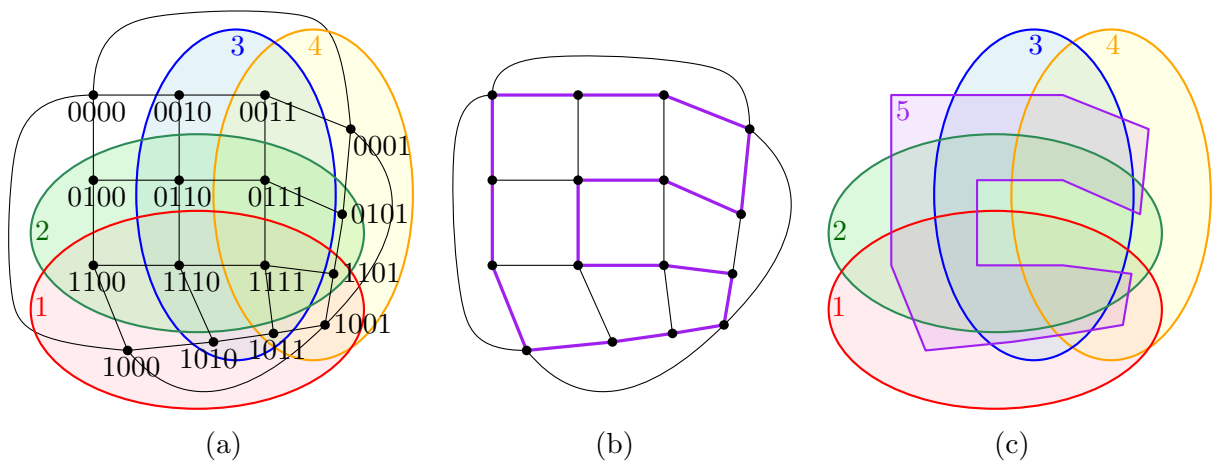}
}
\caption{(a) A (monotone) 4-Venn diagram and its dual graph, a (monotone) 4-Venn quadrangulation; (b) a Hamilton cycle in the quadrangulation; (c) the 5-Venn diagram obtained from~(a) by adding the curve corresponding to the Hamilton cycle in~(b).}
\label{fig:dual-ext}
\end{figure}

Problems on Venn diagrams such as Winkler's conjecture are best discussed by considering the \defi{dual graph} of the Venn diagram, i.e., the plane graph that has a vertex inside every region, and edges connecting vertices corresponding to neighboring regions; see Figure~\ref{fig:dual-ext}.
For this, we consider the \defi{$n$-dimensional hypercube~$Q_n$}, or \defi{$n$-cube} for short, the graph formed by all bitstrings of length~$n$, with an edge between any two bitstrings that differ in a single bit.
The dual graph~$Q(D)$ of a simple $n$-Venn diagram~$D$ satisfies the following properties:
\begin{enumerate}[label=\protect\circled{\arabic*},leftmargin=8mm]
\item It is a connected subgraph of~$Q_n$ that is also spanning, i.e., all $2^n$ vertices are present.
Specifically, the vertex in~$Q(D)$ corresponding to a region of the diagram~$D$ is the characteristic vector of the inside/outside relation with respect to each curve, where 0 and 1 represent outside and inside, respectively.\footnote{When considering the diagram in the plane, then the inside and outside of a curve~$C$ are the bounded and unbounded region of~$\mathbb{R}^2\setminus C$, respectively.}
\item It is a plane quadrangulation, i.e., it is drawn in the plane without edge crossings, and every face (including the outer face) is a 4-cycle.
\item For every position~$i\in\{1,\ldots,n\}$ and every bit~$b\in\{0,1\}$, the subgraph of~$Q(D)$ induced by all vertices~$x$ with $x_i=b$ is connected.
\end{enumerate}
Conversely, the dual of any subgraph of~$Q_n$ satisfying these properties is an $n$-Venn diagram.
We refer to subgraphs of~$Q_n$ satisfying \circled{1}--\circled{3}, which are the duals of $n$-Venn diagrams, as \defi{$n$-Venn quadrangulations}.

We refer to an $n$-Venn quadrangulation as \defi{monotone}, if it satisfies the following additional condition, where the \defi{weight}~$w(x)$ of a bitstring~$x$ is the number of 1s in~$x$:
\begin{enumerate}[label=\protect\circled{\arabic*'},leftmargin=8mm,start=3]
\item Every vertex~$x$ with weight $0<w(x)<n$ has a neighbor of weight $w(x)-1$ and a neighbor of weight $w(x)+1$.
\end{enumerate}
Monotone $n$-Venn quadrangulations are precisely the duals of monotone $n$-Venn diagrams.
Note that \circled{3'} implies \circled{3}, as connectedness in a monotone Venn quadrangulation is ensured via the vertices~$0^n$ and~$1^n$.
Monotone $n$-Venn quadrangulations are special cases of so-called \defi{rhombic strips} recently studied in~\cite{MR4863586,flattice_preprint}.

Extending a given diagram~$D$ by a single curve means that the new curve has to enter and leave every region of~$D$ exactly once, splitting it into two, one of them inside and the other one outside the newly added curve.
This corresponds to traversing a Hamilton cycle in the dual graph~$Q(D)$, i.e., a cycle in the graph that visits every vertex exactly once; see Figure~\ref{fig:dual-ext}~(b)+(c).

Conjecture~\ref{conj:winkler} can thus be restated equivalently as follows:
For $n\geq 2$, every $n$-Venn quadrangulation admits a Hamilton cycle.

\subsubsection{The counterexamples}

\begin{figure}[t!]
\makebox[0cm]{ % artificial box to center the picture
\includegraphics[scale=0.65]{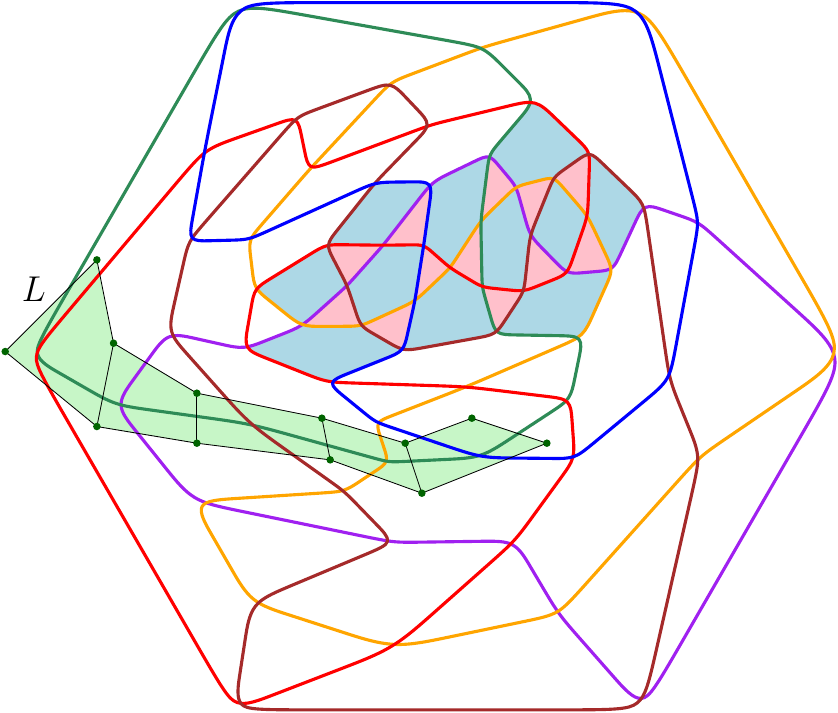} \hspace{4mm}
\includegraphics[scale=1]{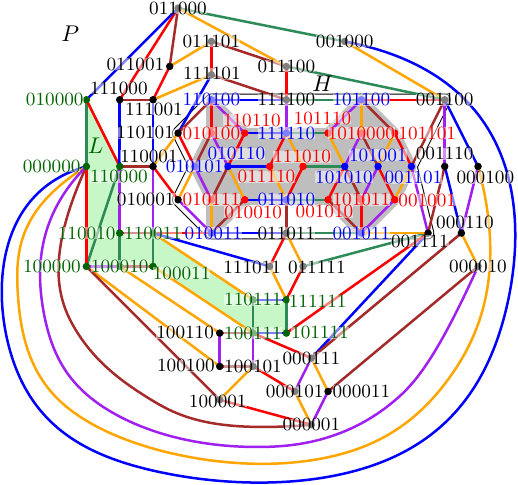}
}
\caption[]{(Left) A counterexample to Winkler's conjecture for $n=6$, namely a simple (non-monotone) 6-Venn diagram that cannot be extended to a simple 7-Venn diagram by adding a suitable curve.
Highlighted are 12 red regions that are surrounded by only 11 blue regions, which is a local obstruction for extendability.\footnotemark{}
(Right)~The dual graph of the diagram on the left, namely a 6-Venn quadrangulation~$P$ that has no perfect matching and hence no Hamilton cycle.
Edges are colored according to the colors of the curves.
The highlighted subgraph~$H$ contains a subset of 12 red vertices in one partition class and their 11 blue neighbors, witnessing the non-existence of a perfect matching.
The green subgraph~$L$ is the ladder used for extending the counterexample to all values of~$n>6$, and all those extensions contain a copy of~$H$.
}
\label{fig:counter1}
\end{figure}

As $Q_n$ is a bipartite graph, Venn quadrangulations are bipartite, too.
A necessary condition for the existence of a Hamilton cycle in a bipartite graph is the existence of a perfect matching.
We provide counterexamples to Conjecture~1 by constructing, for every $n\geq 6$, an $n$-Venn quadrangulation that has no perfect matching and thus no Hamilton cycle.
One of our counterexamples for $n=6$ is shown in Figure~\ref{fig:counter1}.

\begin{theorem}
\label{thm:counter1}
For every $n\geq 6$ there is an $n$-Venn quadrangulation that has no perfect matching and hence no Hamilton cycle.
Its dual is a simple $n$-Venn diagram that cannot be extended to a simple $(n+1)$-Venn diagram by adding a suitable curve.
\end{theorem}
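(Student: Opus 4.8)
The plan is to prove both assertions at once by producing, for every $n \ge 6$, an explicit $n$-Venn quadrangulation $P = P_n$ that has no perfect matching. The second assertion then follows for free from the dictionary set up above: a subgraph of $Q_n$ satisfying \circled{1}--\circled{3} is the dual graph of a simple $n$-Venn diagram, extendability of that diagram by a single curve is equivalent to the existence of a Hamilton cycle in $P$, and a Hamilton cycle in a bipartite graph forces a perfect matching (its alternate edges form one). Since $P \subseteq Q_n$ is bipartite with color classes (even- and odd-weight bitstrings) of equal size $2^{n-1}$, it therefore suffices to certify the absence of a perfect matching, which by Hall's theorem I will do by exhibiting in one color class a set whose neighborhood is strictly smaller.

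For the base case $n = 6$ I would take the quadrangulation $P$ drawn in Figure~\ref{fig:counter1} and first check that it really is a $6$-Venn quadrangulation, i.e.\ that it satisfies \circled{1}--\circled{3}: it is a spanning connected subgraph of $Q_6$, each of its faces (including the outer one) is a $4$-cycle, and each of the twelve slices $\{x : x_i = b\}$ is connected. This is a finite verification, ultimately underwritten by the exhaustive enumeration announced in the abstract. I would then read off the Hall violator $H$: a set $R$ of $12$ vertices in one color class whose entire neighborhood in $P$ is a set $B$ of only $11$ vertices. Because $|N_P(R)| = 11 < 12 = |R|$, Hall's condition fails and $P$ has no perfect matching, hence no Hamilton cycle, so its dual is a non-extendable simple $6$-Venn diagram.

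For $n > 6$ I would argue by induction, building an $(n+1)$-Venn quadrangulation $P_{n+1}$ out of $P_n$ by a controlled operation along the distinguished ladder $L$. The new coordinate $n+1$ has to create $2^n$ additional regions, and the idea is to let all of this new structure be produced by ``growing'' $P_n$ along $L$, while keeping the portion of the quadrangulation surrounding the copy of $H$ combinatorially unchanged. I would then verify that the resulting graph again satisfies \circled{1}--\circled{3}: that every face remains a $4$-cycle, that every coordinate slice (including the two slices of the new coordinate) remains connected, and that the diagram is irreducible --- the last point being essential, since a reducible diagram is always extendable and would therefore possess a perfect matching. The crucial bookkeeping is that the operation takes place away from $R$, so that the $12$ vertices of $R$ gain no new neighbors and $N_{P_{n+1}}(R) = B$ still has size $11$; hence every member of the family contains a copy of $H$ and inherits the Hall violation, yielding the absence of a perfect matching for all $n \ge 6$.

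The main obstacle is twofold. The existence of a suitable seed at $n = 6$ is itself the hard discovery, and I expect it to be inaccessible by hand: it rests on having searched through all $6$-Venn quadrangulations and isolated one admitting a deficient set. The second, more structural, difficulty is verifying that the ladder operation preserves all three defining properties of a Venn quadrangulation simultaneously while not enlarging $N(R)$; of these, maintaining slice-connectivity \circled{3} across the stretched coordinate and ruling out reducibility are the conditions most easily broken, and thus the ones demanding the most care.
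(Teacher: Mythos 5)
Your proposal follows essentially the same route as the paper: the base case is the computer-found $6$-Venn quadrangulation of Figure~\ref{fig:counter1} with its $12$-versus-$11$ Hall violator, and the induction extends it while keeping a copy of $H$ untouched and disjoint from the modification region, which is exactly what the paper's Lemma~\ref{lem:ladder} does concretely by gluing $P^-0$ to a mirrored copy $\rev(P^-)1$ along the outer cycle of the ladder. One small remark: you need not separately verify irreducibility of the extended diagrams --- once the Hall violation rules out a perfect matching, non-extendability (and hence irreducibility) follows automatically, so that check is redundant.
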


One may wonder whether Winkler's conjecture can be salvaged for monotone diagrams, i.e., those that can be drawn with convex curves.
Our next theorem shows that this is not possible.
A counterexample for $n=7$ is shown in Figure~\ref{fig:counter1-monotone}.

\begin{theorem}
\label{thm:counter1-monotone}
For every $n\geq 7$ there is a monotone $n$-Venn quadrangulation that has no perfect matching and hence no Hamilton cycle.
Its dual is a simple monotone $n$-Venn diagram that cannot be extended to a simple $(n+1)$-Venn diagram by adding a suitable curve.
\end{theorem}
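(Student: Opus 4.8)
The plan is to follow the strategy of Theorem~\ref{thm:counter1} and reduce the claim about Hamilton cycles to one about perfect matchings. Every $n$-Venn quadrangulation is a spanning subgraph of $Q_n$, hence a balanced bipartite graph whose colour classes are the even- and odd-weight bitstrings. A Hamilton cycle is an even cycle through all vertices and therefore splits into two perfect matchings, so its existence forces a perfect matching. It thus suffices to construct, for every $n \ge 7$, a monotone $n$-Venn quadrangulation with \emph{no} perfect matching. To certify this I would exhibit a \emph{Hall violator}: a set $S$ inside one colour class with $|N(S)| < |S|$. By the deficiency form of Hall's theorem (the Frobenius--K\"onig condition) such a set precludes a perfect matching, hence a Hamilton cycle; by the dual translation described above, the corresponding diagram is then a monotone Venn diagram admitting no extension.

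Next I would settle the base case $n = 7$ by an explicit example, to be found by a computer search through the monotone $7$-Venn quadrangulations, mirroring the exhaustive search already performed for $n = 6$; the reason the statement starts at $n = 7$ is presumably that no monotone $6$-Venn quadrangulation has such a deficiency, consistent with the fact that the $n=6$ counterexamples of Theorem~\ref{thm:counter1} are non-monotone. For the resulting quadrangulation $P'$ one verifies, by direct inspection, that it satisfies the three defining properties of a Venn quadrangulation together with the monotonicity condition, and that an explicit set $S$ in one colour class has a strictly smaller neighbourhood --- exactly the role of the highlighted subgraph in Figure~\ref{fig:counter1-monotone}. This proves the theorem for $n = 7$.

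Finally I would extend the counterexample to all $n > 7$ by induction. Since an $(n+1)$-Venn quadrangulation has $2^{n+1}$ vertices, each step must double the vertex count; the natural mechanism is a \emph{reducible} extension, in which the new curve is threaded through the diagram so as to subdivide every region (so that deleting it recovers the $n$-diagram). Because the new curve only creates transversal two-curve crossings, the extension keeps the diagram simple. The key is to perform this threading --- the ``ladder'' $L$ --- away from and compatibly with the Hall violator, so that the new coordinate creates no additional neighbours of $S$; then a copy of $S$ retains its strictly smaller neighbourhood and the deficiency is inherited at every level. Working in the wire-diagram (rhombic-strip) representation, where adding a curve amounts to inserting a new horizontal line, makes it transparent how to route the ladder and keep the result monotone.

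The main obstacle is the interaction of the two constraints in the extension step: the gadget must at once preserve the monotonicity condition --- every interior-weight vertex still needs a neighbour of weight one larger and one smaller --- and preserve the Hall violator, i.e.\ the doubling must not supply $S$ with new neighbours. In the non-monotone setting of Theorem~\ref{thm:counter1} only the latter mattered, whereas monotonicity is a genuinely stronger local requirement on the weights of neighbours, and the full prism over the quadrangulation is in general neither planar nor deficiency-preserving. I therefore expect the crux to be an explicit, verifiable description of the ladder in the wire-diagram representation, together with a check that for every $n \ge 7$ the extended object is a planar, spanning, connected, monotone quadrangulation containing an isomorphic copy of the obstruction $H$.
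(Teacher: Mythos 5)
Your overall architecture --- reduce Hamiltonicity to perfect matchings via bipartiteness, certify non-matchability by a Hall violator, establish $n=7$ by an explicit computer-found example, and propagate the obstruction to all larger $n$ --- matches the paper's. But the mechanism you propose for the induction step is not merely underspecified; it cannot work. You suggest a ``reducible extension, in which the new curve is threaded through the diagram so as to subdivide every region (so that deleting it recovers the $n$-diagram).'' A curve added to an existing $n$-Venn diagram that subdivides every region is, by the very dual translation you invoke, exactly a Hamilton cycle in the dual quadrangulation --- and the whole point of the construction is that this dual has no Hamilton cycle (indeed not even a perfect matching). The counterexample is precisely the object that admits no such threading, so any attempt to extend it by adding one curve to the existing drawing contradicts its own non-extendability. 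The paper's Lemma~\ref{lem:ladder} circumvents this by \emph{not} adding a curve to the given diagram: it takes the quadrangulation $P$, deletes the interior rungs of a prescribed ladder $L$ (a small $2\times n$ grid-like subgraph joining $0^n$ to $1^n$, not a spanning structure), forms the copy $P^-0$ and a mirrored copy of $P^-$ with a $1$ appended, and glues the two along the ladder's boundary cycle using $2n$ new edges of the new type. This doubles the vertex count to $2^{n+1}$, preserves planarity, the quadrangulation conditions and monotonicity (connectivity in each coordinate is restored through the pieces of the ladder's outer cycle), and leaves an untouched copy $H0$ of the Hall violator because $H$ was chosen disjoint from $L$. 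In the primal picture the resulting $(n+1)$-Venn diagram is \emph{irreducible} with respect to the new curve, which meets only $2n$ regions.

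Two smaller points. Your correct remark that ``the full prism over the quadrangulation is in general neither planar nor deficiency-preserving'' identifies exactly the difficulty the ladder lemma resolves: the construction is a prism over only the outer cycle of $L$, grafted between two planar copies of the slitted quadrangulation. Also, the base case is not obtained by exhaustively enumerating monotone $7$-Venn quadrangulations (the paper states this is out of reach); it is produced by a SAT solver with the occurrence of the obstruction $H$ imposed as a constraint, and then verified directly. As written, your argument establishes the correct reduction to perfect matchings but leaves the base case unexhibited and, more seriously, rests the induction step on a construction that is impossible for the very diagrams being constructed.
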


\footnotetext{This drawing and the one in Figure~\ref{fig:counter1-monotone} were produced using a \texttt{SageMath} program~\cite{scheucher-repo} developed by Manfred Scheucher for his and Felsner's work~\cite{MR4194444} on pseudocircle arrangements, which we used with Scheucher's permission.}

\begin{figure}[h]
\includegraphics[scale=0.65]{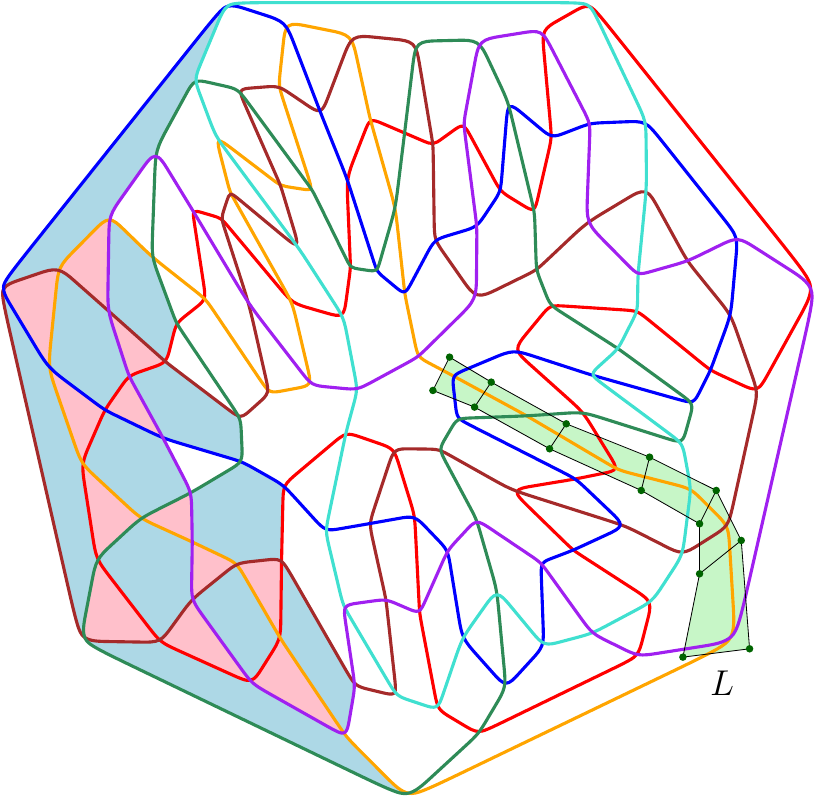} \\
\makebox[0cm]{ % artificial box to center the picture
\includegraphics[scale=0.4]{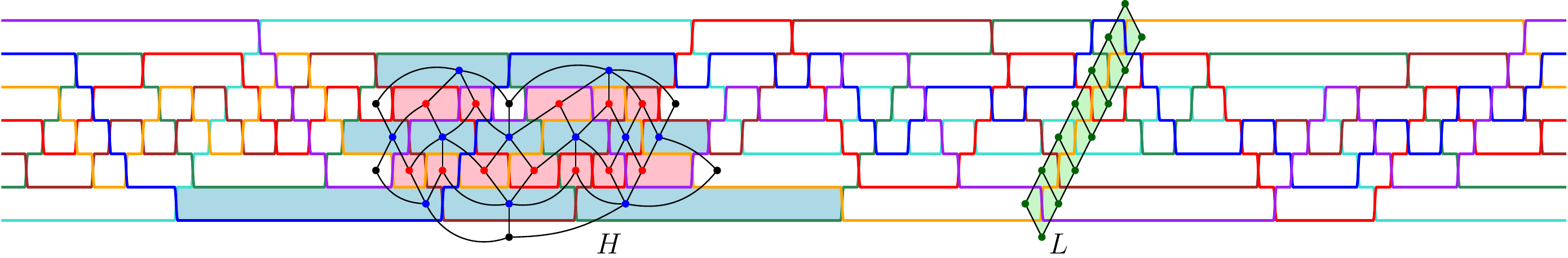}
}
\caption[]{A simple monotone 7-Venn diagram that cannot be extended to a simple 8-Venn diagram by adding a suitable curve.
The bottom picture is the same diagram in a wire representation.
The subgraph~$H$ of the dual Venn quadrangulation is the same as in Figure~\ref{fig:counter1}.
It corresponds to the highlighted 12 red regions that are surrounded by only 11 blue regions, which is a local obstruction for extendability.
The green subgraph~$L$ is the ladder used for extending the counterexample to all values of~$n>7$.
}
\label{fig:counter1-monotone}
\end{figure}

As mentioned before, for $n\leq 5$ all simple $n$-Venn diagrams are extendable, so Theorem~\ref{thm:counter1} is best possible in the sense that the counterexamples for $n=6$ are minimal.
In fact, as part of our proof of Theorem~\ref{thm:counter1} we computed all distinct 6-Venn diagrams and analyzed their properties, summarized in the next theorem, Figure~\ref{fig:6prop} and Table~\ref{tab:counts}.
As it turns out, all 72 non-extendable diagrams for $n=6$ are non-monotone, and so Theorem~\ref{thm:counter1-monotone} is also best possible.

For the counting results stated in the next theorem, we consider two diagrams the same if they differ only by mirroring and/or stereographic projection.
Equivalently, we count their dual $n$-Venn quadrangulations up to graph isomorphism.
Prior to this work, the number of simple $n$-Venn diagrams was only known up to $n\leq 5$.
Specifically, for $n\leq 4$ there is only one, and for $n=5$ there are 20~\cite{MR1486434,MR1789063}.
An \defi{exposed} diagram is one in which the outer region is bounded by all $n$ curves.
It is not hard to see that any monotone diagram is also exposed.
Chilakamarri, Hamburger and Pippert~\cite{MR1400982} constructed $n$-Venn diagrams for every~$n$ in which all regions are (combinatorial) triangles, quadrangles or pentagons, in particular, they are not exposed for~$n\geq 6$.

\begin{theorem}
\label{thm:6}
There are $3.430.404$ non-isomorphic 6-Venn quadrangulations (i.e., simple 6-Venn diagrams up to mirroring and/or stereographic projection).
Of these, $72$ do not have a Hamilton cycle (i.e., the diagram cannot be extended to a simple 7-Venn diagram), 60 do not have a Hamilton path, and 17 do not have a perfect matching.
Furthermore, of the corresponding Venn diagrams, 157.619 are reducible, 32.255 are monotone and 267.510 are not exposed, and all those are extendable to a 7-Venn diagram.
\end{theorem}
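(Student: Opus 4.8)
The proof is computational, so the plan is to perform an exhaustive, isomorph-free enumeration of all 6-Venn quadrangulations and then test each stated property on every single diagram. The starting point is the characterization of $n$-Venn quadrangulations as the spanning subgraphs of $Q_n$ satisfying \circled{1}--\circled{3}; for $n=6$ this means a plane quadrangulation on the $64$ vertices of $Q_6$, necessarily with $2\cdot 64-4=124$ edges and $62$ quadrilateral faces (one per crossing), as follows from Euler's formula. The structural feature I would exploit is that every edge carries one of six ``colors'' (the coordinate in which its endpoints differ) and every quadrilateral face uses exactly two colors, each appearing twice. Thus a diagram is a plane quadrangulation together with a consistent $Q_6$-coloring, which I would encode by a rotation system (the cyclic order of colored edges around each vertex) rooted at the vertex $0^6$ representing the outer region.

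To produce every diagram exactly once, I would use a canonical-construction-path scheme (McKay's orderly generation): build the quadrangulation face by face starting from the outer face, at each step gluing on a new quadrilateral consistent with planarity, the quadrangulation condition, and the hypercube coloring, and accept an extension only if the added face is canonical with respect to a fixed total order on embeddings. This guarantees that each isomorphism class of embedded spherical map---up to reflection (mirroring) and re-rooting (stereographic projection)---is emitted exactly once, yielding the count $3.430.404$. During the search I would enforce \circled{3} to discard quadrangulations of $Q_6$ that are not Venn quadrangulations, and I would track \circled{3'} for the monotone tally.

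For each generated diagram I would then run the property tests. Perfect matchings are decidable in polynomial time, isolating the $17$ diagrams without one. Hamilton cycles and Hamilton paths, though NP-hard in general, are trivially decided on these $64$-vertex graphs by backtracking or a SAT solver, giving the $72$ non-Hamiltonian and $60$ path-free diagrams. Reducibility is tested by deleting each curve in turn and checking whether the remaining five curves still form a Venn diagram; monotonicity is property \circled{3'}; exposedness is the condition that the outer region $0^6$ is incident to all six colors. A useful internal consistency check is that in a balanced bipartite graph a Hamilton cycle forces a Hamilton path and a Hamilton path forces a perfect matching, so the no-perfect-matching class is contained in the no-Hamilton-path class, which is contained in the non-Hamiltonian class; the numbers $17\le 60\le 72$ respect this nesting. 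The final sentence then reduces to verifying that the $72$-element non-Hamiltonian set is disjoint from the reducible, monotone, and non-exposed classes, an immediate set-membership check once all labels are recorded.

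The main obstacle is not any single property test but trusting the exhaustiveness and duplicate-freeness of a search producing millions of objects. I would address this on three fronts: (i) reproducing the known values for $n\le 5$ (one diagram for $n\le 4$, and twenty for $n=5$) with the very same code; (ii) re-deriving the total count $3.430.404$ with a second, independently written generator using a different encoding---for instance a direct enumeration of curve arrangements rather than of dual quadrangulations; and (iii) cross-checking the number of \emph{labeled} against \emph{unlabeled} diagrams through an orbit-counting (Burnside) consistency test over the symmetry group generated by mirroring and sphere re-rooting. Agreement of these independent computations is what makes the exact figures in the statement reliable.
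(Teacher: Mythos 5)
Your proposal is correct in substance but follows a genuinely different enumeration strategy from the paper. You propose direct isomorph-free generation of the plane quadrangulations by a canonical-construction-path search that glues on one 4-face at a time, rejecting non-canonical extensions so that each isomorphism class (up to mirroring and re-rooting) is emitted exactly once. The paper instead splits each 6-Venn quadrangulation along one curve into two halves sharing a boundary cycle $C\subseteq Q_5$: it first enumerates the candidate cycles $C$ up to hypercube automorphisms (using an averaging argument on the six type-matchings to restrict to lengths $22,24,\ldots,32$), then recursively fills the inside of each $C$ with 4-faces to obtain all ``$C$-quadrangulations'', then pairs up compatible halves and checks the connectivity condition~\circled{3}, and only afterwards removes isomorphic duplicates by canonical labeling. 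This meet-in-the-middle decomposition is what makes the computation feasible in about 30 hours, and it also isolates the reducible case for free (boundary cycle of maximal length $=$ Hamilton cycle of $Q_5$); the price is the post-hoc deduplication step, which your orderly generation would avoid, at the likely cost of a much larger branching tree over all $62$ faces and an expensive canonicity test against the full symmetry group (including re-rooting) at every node. Your property tests, the nesting check (no perfect matching $\subseteq$ no Hamilton path $\subseteq$ no Hamilton cycle, consistent with $17\le 60\le 72$), and your validation plan (reproducing $n\le 5$, independent second generator) closely mirror what the paper actually does --- it cross-checks the reducible count by inserting Hamilton cycles into the twenty 5-Venn quadrangulations and the monotone count by the method of Mamakani, Myrvold and Ruskey. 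One small conflation to fix: the vertex $0^6$ is the \emph{outer region of the diagram}, i.e., an ordinary vertex of the dual quadrangulation, whereas the \emph{outer face} of the quadrangulation is a 4-cycle corresponding to a crossing; rooting at one is not the same as starting the face-growth from the other, though either choice can be made to work.
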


\begin{figure}[b]
\centerline{
\includegraphics[page=10]{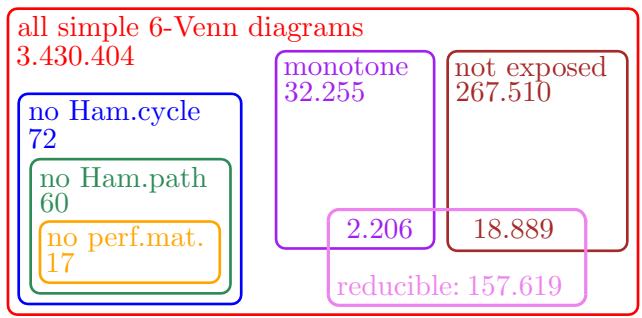}
}
\caption{Census of 6-Venn diagrams and their properties stated in Theorem~\ref{thm:6}, visualized as an Euler diagram. The sizes of the boxes are not to scale.}
\label{fig:6prop}
\end{figure}

As a consequence of Theorem~\ref{thm:6}, there are 6-Venn quadrangulations (in total 72-17=55 many) that have a perfect matching but no Hamilton cycle, and 6-Venn quadrangulations (in total 60-17=43 many) that have a perfect matching but no Hamilton path.

The 6-Venn quadrangulation from Figure~\ref{fig:counter1} is one of the 17 we found that have no perfect matching.
The monotone 7-Venn quadrangulation from Figure~\ref{fig:counter1-monotone} was also found with computer help, but using SAT solvers instead, prescribing the occurrence of a copy of the `obstacle' subgraph~$H$ as one of the constraints.
We feel that it is very much out of reach to determine and/or count all simple (monotone) 7-Venn diagrams.

The programs we used for computing all simple 6-Venn diagrams and the result files with all 6-Venn quadrangulations, as well as the monotone 7-Venn quadrangulation corresponding to the diagram in Figure~\ref{fig:counter1-monotone}, are available for download~\cite{files}.

\subsection{Pruesse and Ruskey's conjecture}

While working on Winkler's conjecture, Pruesse and Ruskey~\cite{pruesse_ruskey_preprint} considered the question of finding a Hamilton cycle not in the dual graph of a Venn diagram, but in the (primal) graph whose vertices are the crossings of the curves, and curve segments between consecutive crossings form the edges; see Figure~\ref{fig:345ham}.
Note that these graphs may have multiple edges.

\begin{figure}[h]
\centerline{
\includegraphics[page=2]{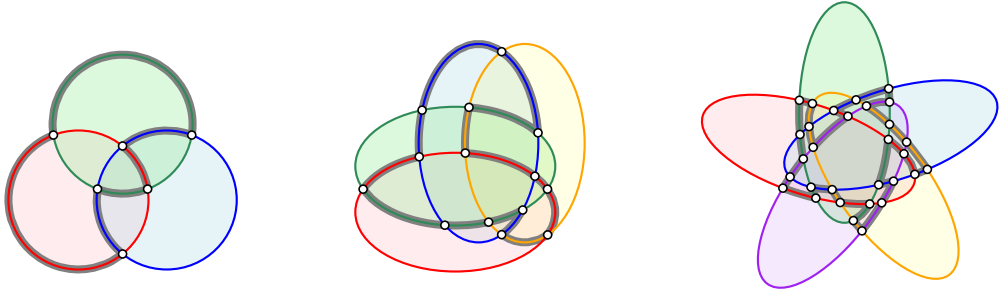}
}
\caption{Hamilton cycles in the graphs of the Venn diagrams from Figure~\ref{fig:345}~(a)--(c).}
\label{fig:345ham}
\end{figure}

\begin{theorem}[{\cite[Cor.~1]{pruesse_ruskey_preprint}}]
For $n\geq 2$, the graph of any simple $n$-Venn diagram has a Hamilton cycle.
\end{theorem}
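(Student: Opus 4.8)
The plan is to study the primal graph $G$ directly as a highly structured $4$-regular plane graph and to produce a Hamilton cycle as a connected $2$-factor. First I would fix the basic parameters. Because the diagram is simple, every one of the $2^n-2$ crossings is incident to exactly four arcs, so $G$ is $4$-regular, and it is connected since the curve arrangement is connected. Euler's formula then gives $2^n$ faces, so the faces of $G$ are precisely the $2^n$ regions. Assigning to each region the parity of its weight yields a proper $2$-colouring of the faces, because crossing a single arc flips exactly one coordinate and hence the parity; thus $G$ is face-$2$-colourable, equivalently $G$ is the medial graph of a connected plane graph $H$ whose vertices are the regions of one parity class and whose edges are the crossings. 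A Hamilton cycle in $G$ is now the same as a choice of two of the four arcs at every crossing forming a single spanning cycle, i.e.\ a connected $2$-factor of $G$.

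The approach I would take is to obtain \emph{some} $2$-factor and then make it connected. A $2$-factor exists by Petersen's theorem since $G$ is regular of even degree, and the complementary arcs form a second $2$-factor $F'$; a generic $2$-factor is a disjoint union of several cycles. To reduce the number of cycles I would use alternating swaps along $F'$: if two distinct cycles of the current $2$-factor are joined by a suitable alternating cycle in $F\cup F'$, rerouting along it splices them into one while preserving $2$-regularity and the spanning property. Iterating such splices, one cycle at a time, would terminate in a single Hamilton cycle, provided that as long as more than one cycle survives there is always an alternating swap that merges two of them. Phrased through the medial picture, the same goal can be read as selecting a spanning substructure of $H$ for which the induced $2$-factor of the medial graph $G$ happens to be a single cycle, rather than splitting into several.

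The crux---and the step I expect to be the main obstacle---is precisely this connectivity, i.e.\ guaranteeing that the process never stalls with two or more cycles that cannot be merged. For arbitrary $4$-regular plane graphs no such guarantee holds (Hamiltonicity of medial graphs is genuinely hard), so the defining Venn structure must be the decisive input. Here I would use property~\circled{3}: for every curve the regions inside it, and those outside it, each induce a connected subgraph, so each curve is a Jordan curve with connected interior and exterior, and, crucially, \emph{all} $2^n$ regions occur. My aim would be to turn this full occupancy into a proof that any two parts of a partial $2$-factor share enough interface arcs to admit a merging swap---for instance by organising the swaps along a spanning tree of the region-adjacency graph, or inductively along the nesting order of the curves, and arguing that a separated collection of cycles would force a missing region, contradicting the Venn property. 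Making this abundance-of-crossings intuition watertight is the heart of the matter, and is presumably where Pruesse and Ruskey's more general machinery does the work that makes the statement a corollary.
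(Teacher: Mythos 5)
Your proposal does not establish the theorem: the one step you yourself flag as ``the crux'' --- guaranteeing that the alternating-swap process never stalls with two or more cycles of the $2$-factor that cannot be merged --- is exactly the content of the theorem, and you leave it unproved. Everything before that point (4-regularity, face-$2$-colourability, existence of a $2$-factor via Petersen, the complementary $2$-factor) is correct but routine, and it holds for \emph{every} $4$-regular plane graph, a class that contains non-Hamiltonian members; so no argument that stops where yours does can succeed. The sketch of how property~\circled{3} might be brought to bear (``a separated collection of cycles would force a missing region'') is only a heuristic: a partial $2$-factor with several components does not obviously correspond to any missing region, and you give no mechanism by which the swaps can be organised along a spanning tree or along the nesting order of the curves. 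As written, this is a plan for a proof, not a proof.

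For comparison, the route taken by Pruesse and Ruskey (and recorded in the paper) avoids the $2$-factor machinery entirely: they show that the graph of a simple $n$-Venn diagram is planar and $4$-connected for $n\geq 3$, and then invoke Tutte's theorem that every $4$-connected planar graph is Hamiltonian. There the Venn structure enters through a connectivity argument --- ruling out small separators using the fact that all $2^n$ regions are present and each curve has connected interior and exterior --- which is a much more tractable way to exploit property~\circled{3} than controlling a cycle-merging process. (The case $n=2$, where the graph is two vertices joined by four parallel edges, is checked directly.) If you want to salvage your approach, you would essentially need to reprove a Tutte-type statement by hand; establishing $4$-connectivity and citing Tutte is the shorter path.
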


Their proof demonstrates that the graph associated with a simple Venn diagram is 4-connected for $n\geq 3$, and thus a well-known theorem of Tutte~\cite{MR81471} implies Hamiltonicity.
Pruesse and Ruskey also conjectured that their result generalizes to non-simple diagrams.

\begin{conjecture}[{\cite[Conj.~1]{pruesse_ruskey_preprint}}]
\label{conj:pruesse-ruskey}
For $n\geq 2$, the graph of any (not necessarily simple) $n$-Venn diagram has a Hamilton cycle.
\end{conjecture}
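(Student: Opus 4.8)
The plan is to refute Conjecture~\ref{conj:pruesse-ruskey} by exhibiting, for every $n \geq 4$, a (necessarily non-simple) $n$-Venn diagram whose primal graph has no Hamilton cycle. Since Pruesse and Ruskey's positive result rests on the $4$-connectivity of the primal graph of a \emph{simple} diagram, the natural idea is to use the extra freedom offered by multiple crossings---a point where $k \geq 3$ curves meet is a vertex of degree $2k$---to destroy a connectivity-type property that is necessary for Hamiltonicity. The cleanest such property is $1$-toughness: every graph with a Hamilton cycle satisfies $c(G - S) \leq |S|$ for every nonempty vertex set $S$, where $c(\cdot)$ counts connected components. I would therefore aim to build a diagram in which a small set $S$ of crossings, when deleted, splits the primal graph into strictly more than $|S|$ pieces; in the extreme case $|S| = 1$ this is just a cut vertex. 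One could instead try a Grinberg-type parity obstruction on the face sizes, since the primal graph is plane, but the presence of multiple edges (hence faces of size~$2$) and the need to preserve the obstruction while adding curves make toughness the more robust route.

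Concretely, I would first construct a base example for $n = 4$. The goal is an arrangement of four closed curves realizing all $16$ inside/outside signatures but organized around a small `bottleneck': a constant-size family of high-multiplicity crossings through which all curve segments joining two (or more) otherwise separate clusters of regions must pass. One then verifies directly that deleting these bottleneck crossings leaves more components than deleted vertices, which already rules out a Hamilton cycle. To reach all larger~$n$, I would add curves one at a time, routing each new curve so that it subdivides regions \emph{within} the existing clusters while passing through (or immediately beside) the bottleneck, so that the separator $S$ grows by at most one vertex per added curve while the number of components grows at least as fast. An induction then shows the inequality $c(G - S) > |S|$ survives for every $n \geq 4$, and in parallel one checks that each intermediate arrangement is a genuine $n$-Venn diagram.

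The main obstacle, as with the dual construction behind Theorem~\ref{thm:counter1}, is reconciling the rigid \emph{global} Venn condition with the \emph{local} gadget that produces the obstruction: the diagram must realize exactly the $2^n$ distinct inside/outside signatures, with all of them present and none repeated, and this constraint is highly non-local. Every insertion of a new curve---and every multiple crossing used to create the bottleneck---must be checked not to create or destroy regions, not to duplicate a signature, and not to introduce an unintended connection that repairs the separator. Getting the base case $n = 4$ to be simultaneously a valid Venn diagram and genuinely non-tough is the delicate part; once a robust gadget is found, the inductive step of adding curves while preserving both the Venn property and the inequality $c(G-S) > |S|$ should be the more routine, if technical, portion of the argument.
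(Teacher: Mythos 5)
Your choice of necessary condition (1-toughness) is sound, and it in fact subsumes the obstruction the paper actually uses: the counterexamples $D_n^*$ of Theorem~\ref{thm:counter2} contain an independent set $U$ of degree-$4$ vertices with $|U|=2^{n-2}+2$ out of $2^{n-1}+2$ vertices in total, so taking $S=V\setminus U$ gives $c(G-S)=|U|>|S|$; the paper phrases this as $|U|>|V|/2$ ruling out a perfect matching. The genuine gap is that your proposal stops exactly where the proof has to begin. The entire content of the paper's argument is an explicit construction: an inductively defined family of monotone wire diagrams $D_n$ with $2^{n-1}$ crossings, maintained with the invariants that curves $1$ and $2$ pass through \emph{every} crossing, that every second crossing involves only curves $1$ and $2$ (degree $4$, and these form an independent set on exactly half the vertices), and that exactly two crossings involve all $n$ curves; for $n\geq 4$ one then perturbs curve $n-1$ to split those two high-degree crossings, which adds two vertices to the independent set and tips it past half. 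You supply neither a base gadget for $n=4$ nor a verified curve-insertion step, and you yourself flag the $n=4$ construction as ``the delicate part'' --- but that part \emph{is} the proof, not a technicality to be deferred.

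Moreover, the specific shape of obstruction you aim for --- a constant-size bottleneck $S$ with $c(G-S)>|S|$ --- is in tension with how Venn diagrams are put together, and it is not what the known counterexamples look like. Every pair of curves must cross (in at least two points), each curve is a closed walk through all of its crossings, and a curve added to pass from an $n$-Venn to an $(n+1)$-Venn diagram must enter every one of the $2^n$ regions in order to double the region count; it therefore sweeps the entire diagram and creates many new crossings that threaten to reconnect whatever a small separator disconnected, so ``routing the new curve within the existing clusters'' is not available to you. The paper's separator is the opposite of small: it has $2^{n-2}$ vertices, and each of the $2^{n-2}+2$ components it leaves behind is a single isolated vertex. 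To repair your argument, either exhibit the concrete $4$-curve arrangement and check both the Venn property and the toughness violation by hand, or switch to the large-independent-set formulation, which is far easier to maintain under the kind of doubling/mirroring construction the paper uses (and which Figure~\ref{fig:counter2} realizes with $6$ independent red vertices against a complement of only $4$).
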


\subsubsection{The counterexamples}

We disprove Conjecture~\ref{conj:pruesse-ruskey} by providing counterexamples for all~$n\geq 4$.
Similarly to Theorem~\ref{thm:counter1}, our counterexamples have an even number of vertices and do not admit a perfect matching, which would be necessary for having a Hamilton cycle.
One of the counterexamples for $n=4$ is shown in Figure~\ref{fig:counter2}.

\begin{theorem}
\label{thm:counter2}
For every $n\geq 4$, there is a non-simple $n$-Venn diagram whose graph has an even number of vertices and no perfect matching, and hence no Hamilton cycle.
\end{theorem}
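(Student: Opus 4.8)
The plan is to establish the base case $n=4$ by an explicit construction (the diagram in Figure~\ref{fig:counter2}) and then to propagate the obstruction to all larger~$n$. The first observation is structural: in the graph of any Venn diagram each curve passing through a crossing contributes exactly two incident arcs, so every vertex has even degree, and the graph is connected with all degrees even. Consequently, multiple edges are irrelevant for the existence of a perfect matching, and one may argue on the underlying simple graph. Moreover, since the graph has an even number of vertices, any Hamilton cycle would decompose into two perfect matchings; hence it suffices to rule out a perfect matching.

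For the base case, I would read off the graph $G_4$ of the chosen non-simple $4$-Venn diagram and verify two things. First, a direct count of the crossings (with their multiplicities) shows that $|V(G_4)|$ is even. Second, to certify that $G_4$ has no perfect matching, I would exhibit a vertex set~$S$ witnessing a violation of the condition in Tutte's $1$-factor theorem, i.e.\ with $o(G_4-S)>|S|$, where $o(\cdot)$ denotes the number of odd components. I expect~$S$ to consist of one or two high-multiplicity crossings acting as a cut set whose deletion isolates several small odd pieces; comparing the number of odd components with~$|S|$ then yields the inequality. This is the primal analogue of the Hall-type deficiency used for the dual graph in Theorem~\ref{thm:counter1}.

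To reach all $n\ge 4$, I would attach a fixed gadget of additional curves (a ``ladder'', in analogy with the green subgraph~$L$ in Figures~\ref{fig:counter1} and~\ref{fig:counter1-monotone}) that adds one curve at a time while keeping the diagram a genuine, non-simple Venn diagram. Each new curve must split every existing region into an inside and an outside part, so that exactly $2^{n+1}$ regions arise and all sign patterns occur; the freedom of the non-simple setting lets me route the new curve through existing crossings, raising their multiplicity, rather than introducing many transversal intersections. I would route each added curve away from the witnessing configuration and choose the routing so that the number of newly created vertices is even, preserving the parity $|V|\equiv 0\pmod 2$.

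The main obstacle is precisely this extension step: the new curve must meet every region (to preserve the Venn property and produce all $2^{n+1}$ sign patterns), yet it must not fill in the deficiency that certifies $o(G-S)>|S|$. These pull in opposite directions, because the curve is forced to visit the very regions that form the odd components after deleting~$S$. The resolution I would pursue is to route the new curve through the crossings of~$S$ and to update the witness to a set $S'\supseteq S$ that also absorbs the newly created crossings nearby, arranging the local picture so that deleting~$S'$ still leaves strictly more odd components than~$|S'|$, i.e.\ so that the deficiency $o(G'-S')-|S'|$ does not decrease. Verifying that such a routing exists for every added curve, while simultaneously controlling the parity of the vertex count, is the technical heart of the argument; once the obstruction is maintained for every $n\ge 4$, the absence of a perfect matching—and hence of a Hamilton cycle—follows as above.
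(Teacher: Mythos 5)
Your overall target is right---an even vertex count plus a Tutte/Hall-type deficiency rules out a perfect matching and hence a Hamilton cycle---but the proposal has a genuine gap exactly where you flag it: the extension step is not carried out, and the route you propose for it runs into the tension you yourself describe without resolving it. Adding a curve to an $n$-Venn diagram must split every region, so the new curve is forced through the very regions whose crossings form the odd components of $G-S$; you suggest absorbing the new crossings into an enlarged witness $S'$ and ``arranging the local picture'' so that the deficiency survives, but no argument is given that such a routing exists, that the result is still a Venn diagram (all $2^{n+1}$ sign patterns must appear), or that the parity of the vertex count is preserved. As stated, the induction does not go through. The paper avoids this difficulty entirely by \emph{not} trying to extend a counterexample: it first builds, for all $n\ge 2$, a well-structured family of non-simple monotone wire diagrams $D_n$ that are \emph{not} counterexamples (they have perfect matchings), establishing inductively that $D_n$ has exactly $2^{n-1}$ crossings, that every second crossing is a degree-$4$ crossing of curves $1$ and $2$ and these form an independent set, and that exactly two crossings involve all $n$ curves. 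Only at the very end, for $n\ge 4$, is the matching destroyed by a purely local modification: slightly shifting curve $n-1$ at the two full crossings splits each into a degree-$4$ and a degree-$(2n-2)$ crossing, producing a diagram $D_n^*$ with $2^{n-1}+2$ vertices in which the independent set $U$ of degree-$4$ vertices has size $2^{n-2}+2$, strictly larger than its complement of size $2^{n-2}$. Taking $S=\overline{U}$ gives $o(G-S)\ge |U|>|S|$, so no perfect matching exists. Because the obstruction is introduced after the induction rather than maintained through it, there is nothing to ``re-route around.''

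A secondary point: your guess for the Tutte violator in the base case (one or two high-multiplicity crossings acting as a small cut set isolating several odd pieces) does not match what actually happens; the witness is a large independent set comprising more than half of all vertices, i.e.\ the deficiency comes from an imbalance between degree-$4$ crossings and the rest, not from a small separator. Also, the remark that all degrees are even and that therefore ``multiple edges are irrelevant for the existence of a perfect matching'' is a non sequitur (multiplicities never affect the existence of a perfect matching, regardless of degree parity), though this is harmless. The reduction from Hamilton cycle to perfect matching is fine, since the vertex count is even.
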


\begin{figure}[h]
\centerline{
\includegraphics[page=3]{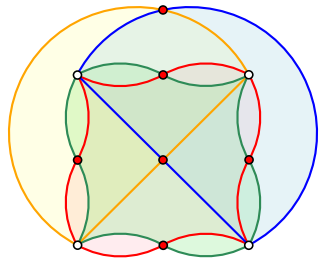}
}
\caption{A counterexample to Pruesse and Ruskey's conjecture for $n=4$, namely a non-simple 4-Venn diagram whose graph has an even number of vertices and no perfect matching, and hence no Hamilton cycle.
The 6 red vertices form an independent set, and the complement has only 4 vertices, witnessing the non-existence of a perfect matching.}
\label{fig:counter2}
\end{figure}

We remark that the counterexamples constructed in the proof of Theorem~\ref{thm:counter2}, including the one shown in Figure~\ref{fig:counter2}, are actually monotone.

\section{Proof of Theorems~\ref{thm:counter1} and~\ref{thm:counter1-monotone}}

Our first lemma describes a technique to construct a simple $(n+1)$-Venn diagram from a simple $n$-Venn diagram, by carefully gluing together two slitted copies of the smaller diagram, surrounding one of the copies with an additional curve.
In this process, a certain local substructure~$H$ is preserved, and we will later apply the lemma by taking for $H$ a substructure that witnesses non-extendability.
The lemma is conveniently stated in the language of the dual Venn quadrangulations.

If~$e$ is an edge of~$Q_n$ whose end vertices differ in the $k$th bit, then we refer to $k$ as the \defi{type} of the edge~$e$.
A \defi{ladder} in an $n$-Venn quadrangulation is a subgraph~$H$ on pairwise distinct vertices~$x_1,\ldots,x_n$ and~$y_1,\ldots,y_n$ such that $x_1=0^n$, $y_n=1^n$, for some $k\in[n]:=\{1,\ldots,n\}$ all pairs~$(x_i,y_i)$ for $i=1,\ldots,n$ are edges of type~$k$ in~$H$, and for $i=1,\ldots,n-1$ all pairs~$(x_i,x_{i+1})$ and~$(y_i,y_{i+1})$ are edges of arbitrary type in~$H$; see Figure~\ref{fig:ladder}.
A ladder for $n=6$ is highlighted in green in Figure~\ref{fig:counter1}.
We note that in a Venn quadrangulation, every 4-cycle also bounds a face, i.e., either the inside or outside of the 4-cycle is empty, because otherwise condition~\circled{3} would be violated.
Consequently, in a ladder, each of the 4-cycles~$(x_i,x_{i+1},y_{i+1},y_i)$ for $i=1,\ldots,n-1$ bounds a 4-face.

\begin{figure}[t]
\centerline{
\includegraphics[page=11]{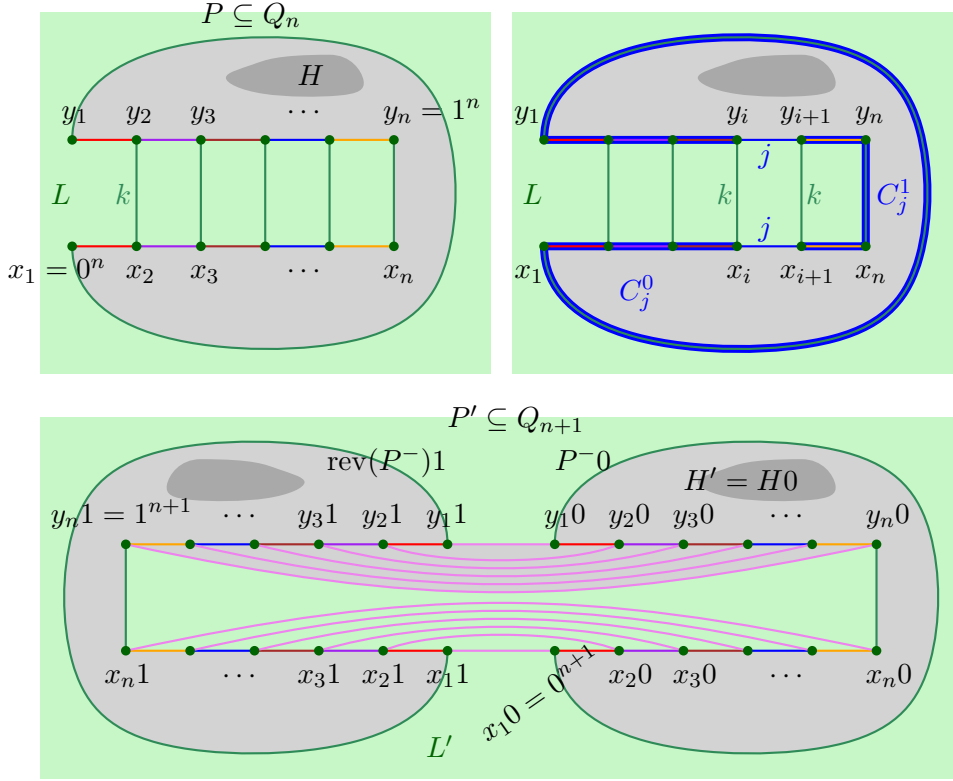}
}
\caption{Illustration of Lemma~\ref{lem:ladder}.}
\label{fig:ladder}
\end{figure}

\begin{lemma}
\label{lem:ladder}
Consider an $n$-Venn quadrangulation~$P$ that contains a ladder~$L$ and a subgraph~$H$ with $L\cap H=\emptyset$.
Then there is an $(n+1)$-Venn quadrangulation~$P'$ that contains a ladder~$L'$ and a copy~$H'$ of~$H$ such that~$L'\cap H'=\emptyset$.
If $P$ is monotone, then $P'$ is also monotone.
\end{lemma}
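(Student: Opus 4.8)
The plan is to work entirely with the dual Venn quadrangulations and to realize $Q_{n+1}$ as two layers, writing each vertex as $(x,b)$ with $x\in\{0,1\}^n$ and $b\in\{0,1\}$ the new $(n+1)$st coordinate; the two subcubes $b=0$ and $b=1$ are joined by the type-$(n+1)$ edges $((x,0),(x,1))$. Since $P$ need not admit a Hamilton cycle, we cannot hope to obtain $P'$ by adding a single new curve to the diagram of $P$ (that would require a Hamilton cycle in $P$). Instead, following the idea of gluing two slitted copies, I would place one copy of $P$ in each layer and then cut both copies open along the ladder $L$ and reglue them through a family of type-$(n+1)$ edges that play the role of the new curve $C_{n+1}$ surrounding the layer $b=1$. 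The ladder is exactly the substructure that makes this possible: its rail $x_1,\dots,x_n$ together with the final rung $(x_n,y_n)$ is a path in $P$ realizing a maximal chain $M\colon 0^n=x_1,\,x_2,\dots,x_n,\,y_n=1^n$ of $Q_n$, and its $n-1$ quadrilateral faces $(x_i,x_{i+1},y_{i+1},y_i)$ form a strip along which the cut can be made cleanly.

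With $P'$ defined in this way, the candidate ladder $L'$ and the candidate copy $H'$ are essentially forced. For $L'$ I would take the rungs of type $n+1$ over the chain $M$: writing $w_1=x_1,\dots,w_n=x_n,\ w_{n+1}=y_n$, set $x'_i=(w_i,0)$ and $y'_i=(w_i,1)$. Then $x'_1=(0^n,0)=0^{n+1}$ and $y'_{n+1}=(1^n,1)=1^{n+1}$, every pair $(x'_i,y'_i)$ is a type-$(n+1)$ edge, and the rails $(x'_i,x'_{i+1})$, $(y'_i,y'_{i+1})$ are the two lifts of the chain edges $(w_i,w_{i+1})$, which have type at most $n$; this is exactly a ladder provided the seam is inserted so that the associated $4$-cycles bound faces. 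For $H'$ I would take the copy of $H$ living in the layer that ends up \emph{outside} $C_{n+1}$, say $H'=\{(v,0):v\in V(H)\}$ with the corresponding edges. Because $H\cap L=\emptyset$, this copy is untouched by the surgery, which only affects vertices along $L$; and since every vertex of $L'$ has the form $(w_i,b)$ with $w_i\in V(L)$, while every vertex of $H'$ has the form $(v,0)$ with $v\notin V(L)$, we get $L'\cap H'=\emptyset$ automatically.

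It then remains to verify that $P'$ is genuinely an $(n+1)$-Venn quadrangulation, i.e.\ to check the spanning/connected condition, the quadrangulation condition, and the slab-connectivity condition, together with the monotonicity condition~(3') in the monotone case. Spanning-ness and connectivity are easy: both layers contain all $2^n$ vertices and remain connected (each carries a copy of $P$ with at most a few edges along $L$ removed), and the seam edges link the two layers. The real work—and the step I expect to be the main obstacle—is verifying the quadrangulation property and the slab-connectivity across the seam. Here the full ladder structure is essential: because each of the $n-1$ ladder $4$-cycles bounds a face and all rungs share the single type $k$, cutting along the rung line opens a disk whose boundary is traversed by the two rails, and regluing the second copy into it should produce only quadrilateral faces while keeping every slab—the set of vertices with a prescribed value in a prescribed coordinate—connected, routing through the seam or through the untouched bulk of a layer when necessary. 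For monotonicity I would check condition~(3') directly: weights are preserved as $w((x,b))=w(x)+b$, the chain $M$ is weight-monotone, and the seam rungs over $M$ supply the missing up- and down-neighbors at the critical vertices $(0^n,1)$ and $(1^n,0)$, so that $P$ monotone forces $P'$ monotone. The main risk is a mismatch in the face structure or a disconnected slab at the seam, and the argument must pin down exactly which layer edges survive the cut and which type-$(n+1)$ edges are inserted, using the ladder faces to certify that the glued object is again a quadrangulation.
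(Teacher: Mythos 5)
Your plan is essentially the paper's proof: delete the $n-2$ interior rungs of $L$, place the slit copy $P^-$ in one layer of $Q_{n+1}$ and a second copy in the other layer, join them by type-$(n+1)$ edges along the ladder, take $L'$ to be the lift of the path $x_1,\ldots,x_n,y_n$, and take $H'$ to be the untouched copy of $H$ in one layer. The points you defer as ``the real work'' are exactly what the paper pins down, so let me record them. (a) The second copy must be the \emph{mirror image} of $P^-$: after assuming w.l.o.g.\ that $(x_1,x_2,y_2,y_1)$ bounds the outer face, the slit graph $P^-$ has outer face bounded by the outer cycle $C$ of the ladder (on all $2n$ vertices $x_1,\ldots,x_n,y_n,\ldots,y_1$), and the second copy is nested inside that face; without reflecting it, the two copies of $C$ carry opposite cyclic orientations and the seam edges cannot be drawn without crossings. (b) The seam consists of all $2n$ edges $(x_j0,x_j1)$ and $(y_j0,y_j1)$ for $j=1,\ldots,n$, i.e., it runs over the whole cycle $C$ and not only over the $n+1$ vertices of your chain $M$; this is what subdivides the annulus between the two copies of $C$ into $2n$ quadrilateral faces and makes $P'$ a quadrangulation. (c) Slab-connectivity is certified by the ladder structure itself: since $x_1=0^n$ and $y_n=1^n$ and opposite edges of a quadrangular face share a type, each type $j\neq k$ occurs on exactly one pair of opposite rail edges of $C$, whose removal splits $C$ into an arc containing $0^n$ and an arc containing $1^n$, lying in the slabs $x_j=0$ and $x_j=1$ respectively; the seam edges join the two copies of each arc, so condition (3) for $P$ transfers to $P'$ (for position $k$ one uses the two full copies of $C$, and for position $n+1$ the two layers themselves). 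Your identification of $L'$ and $H'$, the disjointness argument, and the monotonicity check all agree with the paper.
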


The proof of Lemma~\ref{lem:ladder} is illustrated in Figure~\ref{fig:ladder}.

\begin{proof}
Let $x_1,\ldots,x_n$ and~$y_1,\ldots,y_n$ be the vertices of the ladder~$L$ in~$P$ as defined before, and let $k\in[n]$ be the type of the edges~$(x_i,y_i)$ for $i=1,\ldots,n$.
We write~$C$ for the outer cycle of~$L$, i.e., for the graph $C:=L\setminus\{(x_i,y_i)\mid i=2,\ldots,n-1\}$.
Note that in every 4-cycle of a Venn quadrangulation, there are edges of exactly two different types, and opposite edges have the same type.
Consequently, both edges~$(x_i,x_{i+1})$ and~$(y_i,y_{i+1})$ of~$L$ (and~$C$) have the same type for $i=1,\ldots,n-1$, and because of the condition~$x_1=0^n$ and~$y_n=1^n$, every type~$j\in[n]\setminus\{k\}$ appears for exactly one pair of opposite edges~$(x_i,x_{i+1})$ and~$(y_i,y_{i+1})$ for some $i=1,\ldots,n-1$.
We write~$C_j^0$ and~$C_j^1$ for the two connected subgraphs of~$C$ obtained from~$C$ by removing both edges~$(x_i,x_{i+1})$ and~$(y_i,y_{i+1})$, such that the first one contains the vertex~$x_1=0^n$ and the second one contains the vertex~$y_n=1^n$.

We assume w.l.o.g.\ that the 4-cycle~$(x_1,x_2,y_2,y_1)$ bounds the outer face of~$P$.
Let $P^-$ be the graph obtained from~$P$ by removing the edges~$(x_i,y_i)$ of the ladder~$L$ for all $i=2,\ldots,n-1$.
Let $\rev(P^-)$ denote the plane graph obtained from~$P^-$ by mirroring.
Appending a 0-bit to all vertices of~$P^-$ and a 1-bit to all vertices of~$\rev(P^-)$ gives the graphs~$P^-0$ and~$\rev(P^-)1$.
We let~$P'$ be the plane graph obtained by connecting~$P^-0$ and~$\rev(P^-)1$ via $2n$ additional type-$(n+1)$ edges~$(x_j0,x_j1)$ and~$(y_j0,y_j1)$ for all $j=1,\ldots,n$.
It is straightforward to check that~$P'$ satisfies conditions \circled{1}--\circled{3}, and that if $P$ satisfies condition~\circled{3'}, then $P'$ also satisfies~\circled{3'}.
In particular, the connectivity condition~\circled{3} is ensured via the subgraphs~$C_j^0$ and~$C_j^1$ for all bit positions~$j\in[n]\setminus\{k\}$, and for the bit position~$k$ it is ensured via the subgraphs~$C0$ and~$C1$.

Note that~$P'$ contains a ladder~$L'$ on the vertices~$x_10,\ldots,x_n0,y_n0$ and~$x_11,\ldots,x_n1,y_n1$.
Clearly, $H0$ is a copy of~$H$ in~$P'$ that is disjoint from~$L'$.
\end{proof}

We are now ready to prove Theorem~\ref{thm:counter1}.

\begin{proof}[Proof of Theorem~\ref{thm:counter1}]
The 6-Venn quadrangulation~$P$ from Figure~\ref{fig:counter1} has no perfect matching, as witnessed by the marked subgraph~$H$.
Specifically, $H$ contains a subset of 12 vertices from one partition class (marked red) whose neighborhood (marked blue) has only size~11.
This is the easy direction of Hall's theorem.

Repeatedly applying Lemma~\ref{lem:ladder} to~$P$ with the subgraph~$H$ and the ladder~$L$ marked in Figure~\ref{fig:counter1} yields an $n$-Venn quadrangulation without perfect matching for all $n>6$.
This completes the~proof.
\end{proof}

The proof of Theorem~\ref{thm:counter1-monotone} works very similarly.

\begin{proof}[Proof of Theorem~\ref{thm:counter1-monotone}]
The monotone 7-Venn quadrangulation~$P$ dual to the diagram shown in Figure~\ref{fig:counter1-monotone} contains a copy of the same subgraph~$H$ as considered in the previous proof.
As $H$ has no perfect matching, $P$ has no perfect matching either.

Repeatedly applying Lemma~\ref{lem:ladder} to~$P$ with the subgraph~$H$ and the ladder~$L$ marked in Figure~\ref{fig:counter1-monotone} yields a monotone $n$-Venn quadrangulation without perfect matching for all $n>7$.
This completes the~proof.
\end{proof}

\section{Proof of Theorem~\ref{thm:6}}

In this section we describe our \texttt{C++} and \texttt{SageMath} programs that compute all 6-Venn quadrangulations.
For the reader's convenience, the code and data produced by them are available for download~\cite{files}.
Specifically, the 6-Venn quadrangulations are provided in \texttt{graph6} format, and in a compact binary adjacency list representation.
The following explanations are valid for general~$n$, but have a reasonable computation time only for~$n\leq 6$.
Table~\ref{tab:counts} provides the counts of Venn diagrams with various properties for all $n=1,\ldots,6$.

\begin{table}[h]
\caption{Counts of $n$-Venn diagrams with different properties for $n=1,\ldots,6$ curves.
Every second row (unmarked) counts the diagrams up to mirroring and/or stereographic projection, or equivalently, the number of non-isomorphic $n$-Venn quadrangulations.
Every other row (marked with *) counts the diagrams up to mirroring, or equivalently, the number of non-isomorphic $n$-Venn quadrangulations with a marked vertex which represents the outer region.
We note that Mamakani, Myrvold and Ruskey~\cite{MR2960360} have counted 39.020 monotone 6-Venn diagrams before, which is in disagreement with our count of~77.395.}
\label{tab:counts}
\begin{tabular}{lccrrrrrrrr}
                    &      & $n$ & 1 & 2 & 3 & 4 &   5 &           6 & & OEIS\\ \hline
all                 &      &     & 1 & 1 & 1 & 1 &  20 &   3.430.404 & & A386795 \\
                    & *    &     & 1 & 1 & 1 & 2 & 320 & 219.170.802 & & \\ \hline
monotone            &      &     & 1 & 1 & 1 & 1 &  11 &      32.255 & & A390247 \\
                    & *    &     & 1 & 1 & 1 & 1 &  18 &      77.395 & & \\ \hline
exposed             &      &     & 1 & 1 & 1 & 1 &  20 &   3.162.894 & & \\
                    & *    &     & 1 & 1 & 1 & 1 &  54 &   7.885.573 & & \\ \hline
%not exposed         &      &     & 0 & 0 & 0 & 0 &   0 &     267.510 & & \\
%                    & *    &     & 0 & 0 & 0 & 1 & 266 & 211.285.229 & & \\ \hline
reducible           &      &     & 1 & 1 & 1 & 1 &  11 &     157.619 & & A390248 \\
                    & *    &     & 1 & 1 & 1 & 2 & 182 &  10.031.770 & & \\ \hline
no Hamilton cycle   &      &     & 0 & 0 & 0 & 0 &   0 &          72 & & \\
                    & *    &     & 0 & 0 & 0 & 0 &   0 &       3.932 & & \\ \hline
no Hamilton path    &      &     & 0 & 0 & 0 & 0 &   0 &          60 & & \\
                    & *    &     & 0 & 0 & 0 & 0 &   0 &       3.192 & & \\ \hline
no perfect matching &      &     & 0 & 0 & 0 & 0 &   0 &          17 & & \\
                    & *    &     & 0 & 0 & 0 & 0 &   0 &       1.088 & & \\
\end{tabular}
\end{table}

Maybe surprisingly, the computational proof of Theorem~\ref{thm:6} was the technically most demanding part of this work, even though in the end there are relatively few 6-Venn quadrangulations in total ($<10^7$ many).
In fact, we had first found a counterexample to Winkler's conjecture for $n=7$ (using different methods that involved SAT solvers) and a proof of Theorem~\ref{thm:counter1} for $n\geq 7$, before finding the few counterexamples for $n=6$.

\subsection{Dynamic programming on two halves of a Venn diagram}

We consider a simple $n$-Venn diagram~$D$ and one of its curves, say the $n$th one.
In the dual graph~$Q(D)$, the edges dual to the ones on the curve are all edges of~$Q(D)$ of type~$n$, and they form a matching~$M$ in~$Q(D)$; see Figure~\ref{fig:Cquad}~(a).
Removing the matching edges~$M$ and removing the $n$th bit from all vertex labels splits~$Q(D)$ into two subgraphs~$P,P'\seq Q_{n-1}$, each of which satisfies the following conditions (cf.~\circled{1}--\circled{3} from before):
\begin{enumerate}[label=\protect\circled{\arabic*'},leftmargin=8mm]
\item It is a connected subgraph of~$Q_{n-1}$ that is also spanning, i.e., all $2^{n-1}$ vertices are present.
\item It is a plane graph in which every inner face is a 4-cycle, whereas the length of the outer face is arbitrary.
\end{enumerate}
The outer face is bounded by a cycle~$C$, which is the same for~$P$ and~$P'$, and we refer to it as the \defi{boundary cycle}.
Furthermore, we refer to any subgraph of~$Q_{n-1}$ satisfying \circled{1'}--\circled{2'} with boundary cycle~$C$ as a \defi{$C$-quadrangulation}.
Conversely, given two $C$-quadrangulations~$P,P'\seq Q_{n-1}$ with $C=:(x_1,\ldots,x_\ell)$, we may consider the union
\begin{equation}
\label{eq:Cquad-glue}
H:=P1\,\cup\, P'0 \,\cup\,\big\{(x_i0,x_i1)\mid i=1,\ldots,\ell\big\}\,\seq\, Q_n,
\end{equation}
and if this graph~$H$ satisfies the connectivity condition~\circled{3}, then $H$ is an $n$-Venn quadrangulation.
In this case we say that $P$ and~$P'$ are \defi{compatible}.

\begin{figure}[h]
\centerline{
\includegraphics[page=12]{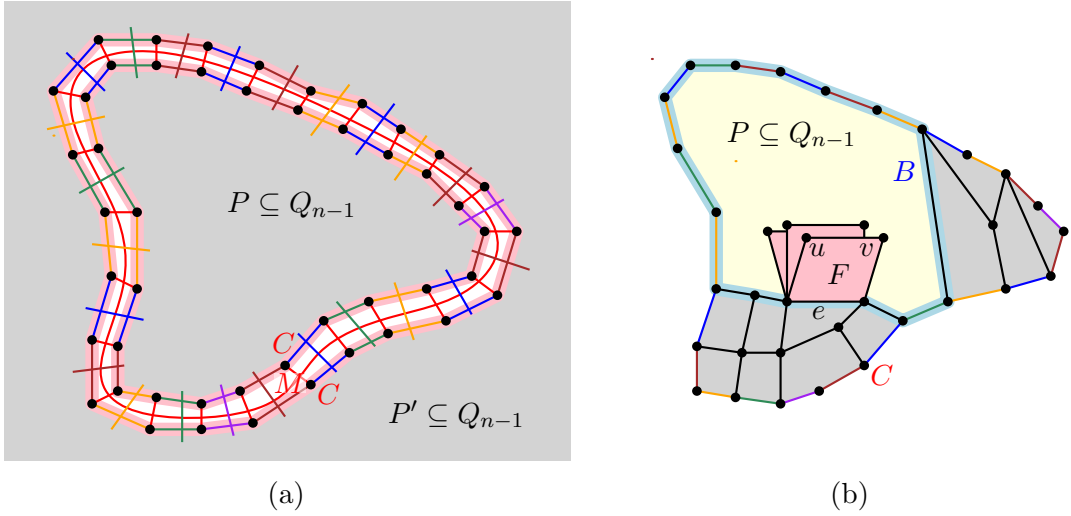}
}
\caption{(a) Splitting an $n$-Venn quadrangulation into two $C$-quadrangulations $P,P'\seq Q_{n-1}$. (b) Recursively computing $C$-quadrangulations by including/excluding 4-faces~$F$ to fill the inner face~$B$.}
\label{fig:Cquad}
\end{figure}

We can therefore compute all simple $n$-Venn quadrangulations as follows:
\begin{enumerate}[label=(\roman*),leftmargin=8mm]
\item Compute all cycles~$C$ of~$Q_{n-1}$ up to automorphisms of the hypercube.
\item For every cycle~$C$ computed in step~(i), compute all $C$-quadrangulations, i.e., subgraphs of~$Q_{n-1}$ satisfying~\circled{1'}--\circled{2'} with $C$ as the boundary cycle.
\item For any two $C$-quadrangulations $P,P'\seq Q_{n-1}$ computed in step~(ii), check if they are compatible, i.e., compute the graph~$H$ as in~\eqref{eq:Cquad-glue} and check whether $H$ satisfies condition~\circled{3}.
If so, store it.
\item From the list of $n$-Venn quadrangulations stored in step~(iii), filter out isomorphic duplicates.
\item Analyze properties of all non-isomorphic $n$-Venn quadrangulations resulting from step~(iv) according to the different properties, namely existence of a perfect matching and a Hamilton cycle or path, being monotone, exposed, reducible etc.
\end{enumerate}
In the following, we describe each of these five steps in more detail.
The first three steps are implemented as a single \texttt{C++} program, for efficiency reasons, and take approximately 30 hours to compute on a single desktop computer.
The last two steps are implemented in \texttt{SageMath}, where several convenient high-level functions are available for working with graphs, and take approximately 60 hours to compute.

For validation purposes, we also computed the reducible 6-Venn diagrams in a different way, namely by inserting all possible Hamilton cycles into the 20 known 5-Venn quadrangulations, confirming the count of~157.619.
Similarly, we also computed the monotone 6-Venn diagrams in a different way, using the method described in~\cite{MR2960360}, confirming the count of~32.255.

\subsubsection{Step (i): Computing all cycles~$C$ of~$Q_{n-1}$}

Any cycle~$C=(x_1,\ldots,x_\ell)$ in~$Q_{n-1}$ can be described by the first vertex~$x_1$ and the sequence of edge types~$(\tau_1,\ldots,\tau_\ell)$, i.e., $\tau_i\in[n]$ is the type of the edge of~$C$ between vertices~$x_i$ and~$x_{i+1}$ for $i=1,\ldots,\ell$ (where $x_{\ell+1}:=x_1$).
The automorphisms of the hypercube are given by all permutations of positions and taking the exclusive-or with an arbitrary fixed bitstring of length~$n$.
Consequently, we can assume w.l.o.g.\ that $x_1=0^{n-1}$.
Note that for any sequence~$\tau=(\tau_1,\ldots,\tau_\ell)$ of edge types from~$[n]$, there is a unique permutation~$\sigma$ on~$[n]$ such that $\sigma(\tau)=(\sigma(\tau_1),\ldots,\sigma(\tau_\ell))$ is lexicographically minimal, i.e., for every prefix~$(\sigma(\tau_1),\ldots,\sigma(\tau_p))$, $p=1,\ldots,\ell$, of the permuted sequence, if it contains $r$ different types, then these are the types~$1,\ldots,r$.
Therefore, we can assume w.l.o.g.\ that~$\tau=\sigma(\tau)$ is lexicographically minimal and furthermore, for every cyclic shift and/or reversal~$\tau'$ of the sequence~$\tau$, we have $\tau\leq_{\rm lex} \sigma(\tau')$.
Factoring out the automorphisms of~$Q_{n-1}$ in this way drastically reduces the number of distinct cycles~$C$ that need to be considered.

In the following, we describe another observation that allows us to restrict the lengths of the boundary cycles to consider.
As mentioned before, the total number of crossings in a simple $n$-Venn diagram~$D$ is~$2^n-2$.
Clearly, this number equals the number of faces of~$Q(D)$.
The number of vertices of~$Q(D)$ is~$2^n$, and therefore Euler's formula yields~$2^{n+1}-4$ for the number of edges of~$Q(D)$.
These edges split into $n$ different matchings~$M_1,M_2,\ldots,M_n$ according to their types, and we can therefore always guarantee that one matching~$M_i$ has size $|M_i|\geq \frac{2^{n+1}-4}{n}$, and the fraction on the right should be rounded up to the next even integer, because the corresponding boundary cycle in the hypercube must have even integral length.
For $n=6$ this yields the lower bound~$|M_i|\geq 22$ for one of the matchings.
By symmetry, we can thus restrict our search to cycles~$C$ in~$Q_5$ of lengths~$22,24,26,28,30,32$.
Note that if $C$ has the maximum length~$2^{n-1}$, then it is a Hamilton cycle in~$Q_{n-1}$, and the resulting $n$-Venn diagram~$H$ obtained by pairing two compatible $C$-quadrangulations is reducible and thus extendable.

The number of cycles of lengths~$22,24,26,28,30,32$ in~$Q_5$ up to automorphisms is $819.049,\allowbreak 2.168.994, 3.859.153, 4.028.138, 1.934.320, 237.675$, respectively.
The last number agrees with the fifth entry of OEIS sequence~A159344, as it should.

\subsubsection{Step (ii): Computing all $C$-quadrangulations}

\begin{figure}
\makebox[0cm]{ % artificial box to center the picture
\includegraphics[page=13]{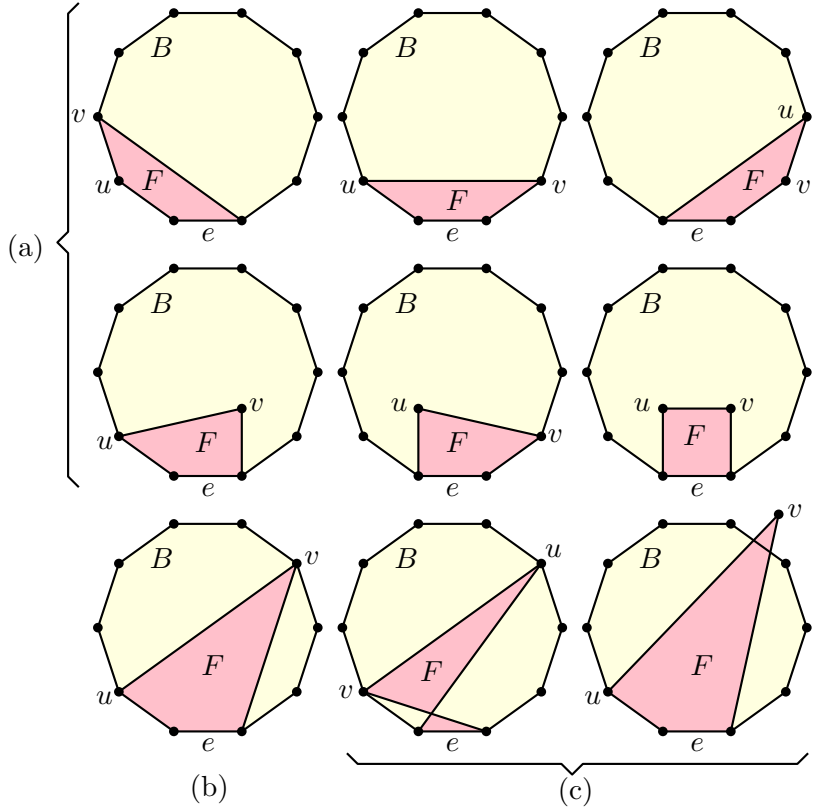}
}
\caption{Different cases in the recursion to compute all $C$-quadrangulations.}
\label{fig:Cquad-rec}
\end{figure}

For a fixed cycle~$C$ in~$Q_{n-1}$ as computed in the previous step, we aim to compute all $C$-quadrangulations.
This is done recursively, by filling the inside of~$C$ with 4-faces, one at a time.
In each step, the current partial $C$-quadrangulation~$P$ has an inner face~$B$ that still needs to be quadrangulated; see Figure~\ref{fig:Cquad}~(b).
For this we consider every edge~$e$ along~$B$, and every 4-cycle~$F$ in~$Q_{n-1}$ that contains~$e$ (there are $n-3$ such 4-cycles).
Every such 4-cycle~$F$ containing~$e$ has two additional vertices~$u$ and~$v$.
We locate $u$ and~$v$ in~$P$ and distinguish the following cases:
If $u$ and~$v$ are on~$B$ but in the wrong cyclic order, then $F$ is excluded, i.e., it will not be added to~$P$ as a 4-face.
Similarly, if $u$ or~$v$ are present in~$P$ but not on~$B$, then $F$ is excluded.
These two cases are shown in Figure~\ref{fig:Cquad-rec}~(c), respectively.
If $u$ and~$v$ are on~$B$ but not next to~$e$ (Figure~\ref{fig:Cquad-rec}~(b)), then $F$ is ignored for this recursion step (because including~$F$ would split the inner face bounded by~$B$ into two separate faces).
If $u$ and~$v$ are next to~$e$ on~$B$ or not in~$P$ (Figure~\ref{fig:Cquad-rec}~(a) shows the six possible such cases), then $F$ is eligible for a branching step.
Specifically, we branch and recurse on the two possibilities of either including~$F$, updating~$B$ accordingly, or excluding~$F$, which does not change~$B$.
Some care has to be taken to choose a clever branching order to avoid recomputing the same $C$-quadrangulation along different insertion orderings of its 4-faces, and also to keep the recursion tree small, i.e., to enforce and detect dead ends quickly.
Such a dead end would be an edge~$e$ on~$B$ for which all 4-cycles~$F$ containing~$e$ are excluded (apart possibly from one that can be included `behind'~$e$).

\subsubsection{Step (iii): Pairing up $C$-quadrangulations}

For a fixed cycle~$C$ of~$Q_{n-1}$, after computing all $C$-quadrangulations, we consider all pairs~$P,P'$ of them, and check if~$P$ and~$P'$ are compatible, i.e., whether the graph~$H$ defined in~\eqref{eq:Cquad-glue} satisfies condition~\circled{3}.
This is achieved by a simple graph search, run separately for every bit position~$i\in[n]$ and bit value~$b\in\{0,1\}$, i.e., $2n$ searches in total.
This can be slightly optimized by first doing the graph searches separately within each graph~$P$ and~$P'$, checking whether and how the subgraphs given by a fixed~$i\in[n]$ and~$b\in\{0,1\}$ intersect with~$C$ (already this may lead to discarding~$P$ or~$P'$, independently of the other), and then computing the overall connectivity resulting from that.

\subsubsection{Step (iv): Filtering out isomorphic graphs}

After computing all $n$-Venn quadrangulations as described in the previous step, we need to filter out duplicates, i.e., graphs that are isomorphic copies.
These isomorphic copies can arise from two different boundary cycles~$C$ and~$C'$, obtained by splitting the same diagram along two different curves.
The filtering is achieved by first computing a canonical graph labeling, available in \texttt{SageMath} via the function \texttt{canonical\_label()} (which internally uses the tool \texttt{bliss}), and conversion into \texttt{graph6} format, so that the isomorphism test becomes a string comparison.

\subsubsection{Step (v): Analyzing properties of $n$-Venn quadrangulations}

The different properties of a given $n$-Venn quadrangulation can be checked straightforwardly, either by efficient algorithms (perfect matchings, monotone, exposed, reducible) or by brute-force (Hamilton cycle/path), using standard \texttt{SageMath} functions.
For $n=6$ there are relatively few graphs and they are relatively small and sparse (64 vertices and 124 edges), so this is fast enough.

\section{Proof of Theorem~\ref{thm:counter2}}

In this section, we prove Theorem~\ref{thm:counter2}.

\begin{figure}[b!]
\centerline{
\includegraphics[page=7]{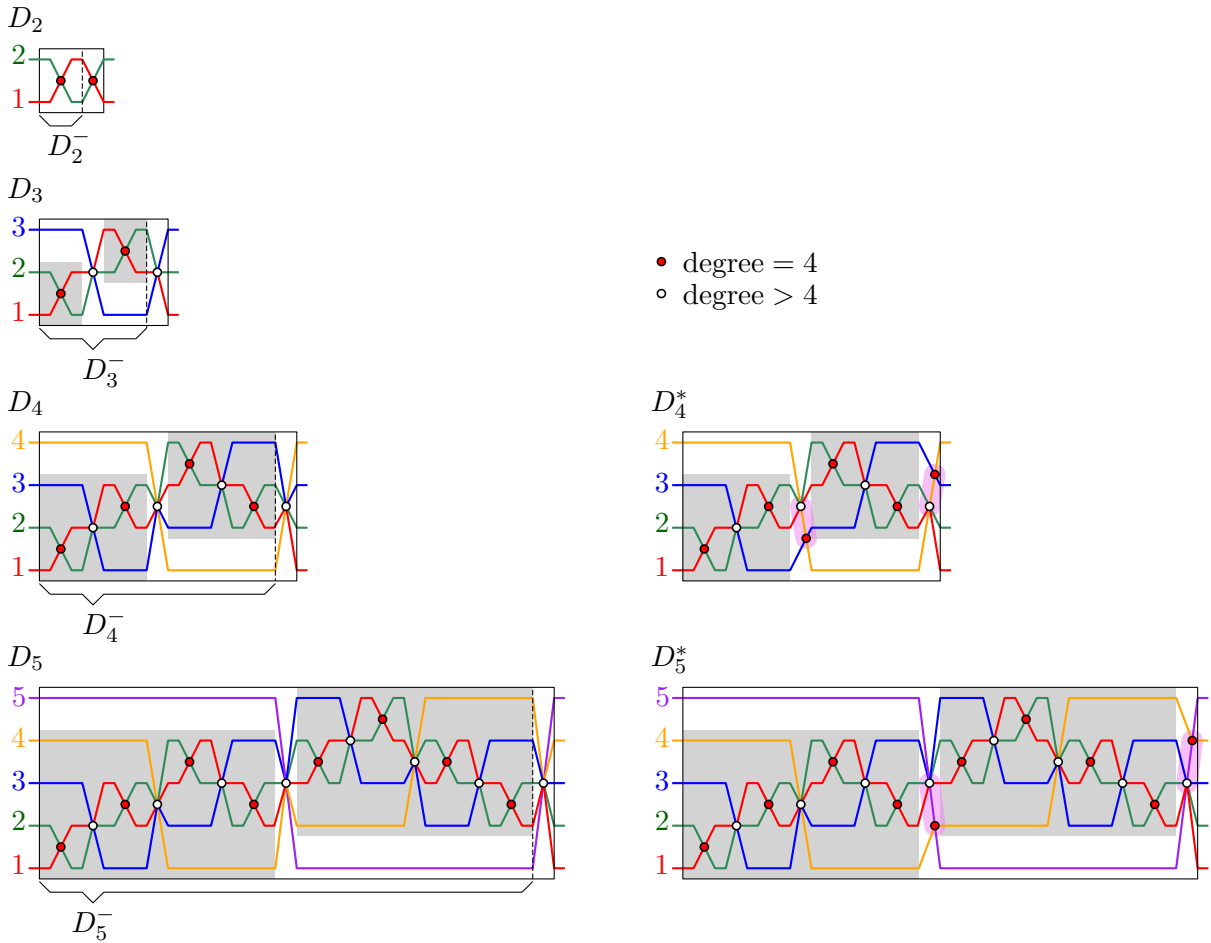}
}
\caption{Non-simple Venn diagrams~$D_n$, $n\geq 2$, and $D_n^*$, $n\geq 4$, constructed in the proof of Theorem~\ref{thm:counter2}.
Another drawing of~$D_4^*$ is shown in Figure~\ref{fig:counter2}.
}
\label{fig:lense1}
\end{figure}

\begin{proof}[Proof of Theorem~\ref{thm:counter2}]
We inductively construct a family of non-simple monotone $n$-Venn diagrams~$D_n$ for $n\geq 2$; see Figures~\ref{fig:lense1} and~\ref{fig:lense2}.
These are not the counterexamples yet, but those will be built from~$D_n$ later by small modifications.
Specifically, we describe $D_n$ as a wire diagram.
The curves are labeled~$1,2,\ldots,n$ from bottom to top at the left and right boundary, according to their height.

\begin{figure}[t!]
\centerline{
\includegraphics[page=8]{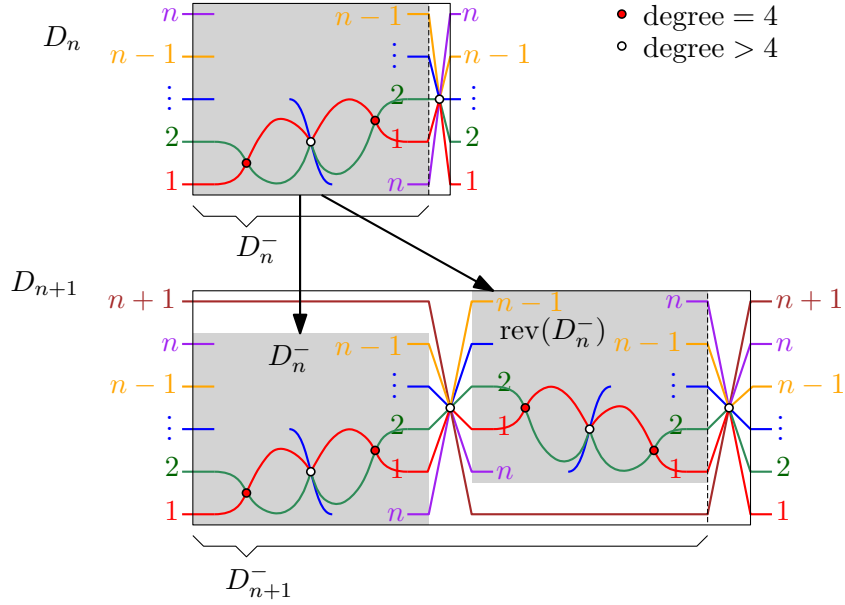}
}
\caption{Illustration of the general inductive construction of~$D_{n+1}$ from $D_n$.
The reversed copy of~$D_n^-$ is denoted $\rev(D_n^-)$.}
\label{fig:lense2}
\end{figure}

\begin{figure}[b!]
\centerline{
\includegraphics[page=9]{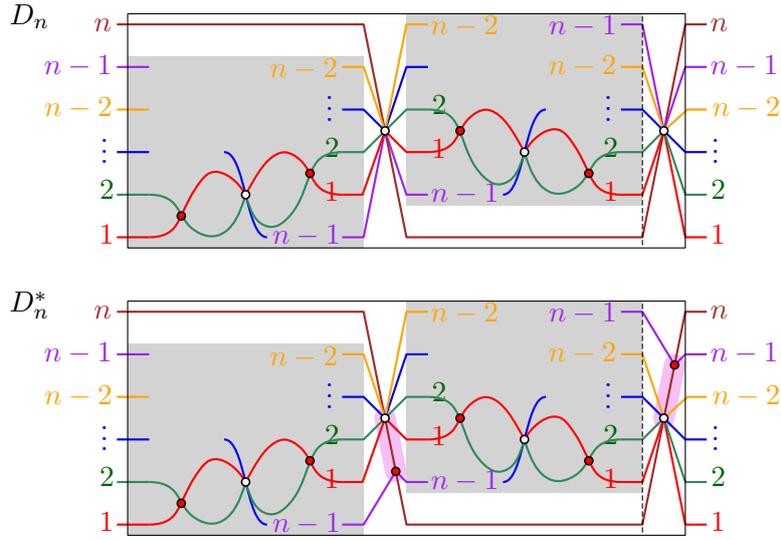}
}
\caption{Modification of the diagram~$D_n$ to obtain $D_n^*$ by slightly shifting the curve~${n-1}$.}
\label{fig:lense3}
\end{figure}

For the induction basis $n=2$, we take $D_2$ as the wire diagram obtained from the unique Venn diagram with 2 curves, which has 2 crossings; see Figure~\ref{fig:lense1}.
For the induction step~$n\rightarrow n+1$, let $D_n^-$ be the diagram obtained from~$D_n$ by removing the rightmost crossing.
To construct $D_{n+1}$, we take a copy of~$D_n^-$ vertically aligned at heights~$1,\ldots,n$, a second copy of~$D_n^-$ but reversed (mirrored along a vertical line, i.e., left and right are interchanged) to the right of the first copy and vertically aligned at heights~$2,\ldots,n+1$; see Figure~\ref{fig:lense2}.
We then add the curve~$n+1$ starting on the left at height~$n+1$ above the first copy of~$D_n^-$, staying there until its right side, then crossing below the reversed second copy of~$D_n^-$ to height~1, staying there until its right side, and finally crossing back up to height~$n+1$ at the right boundary.
The curves $1,\ldots,n$ are connected between the two copies of~$D_n^-$ through one additional crossing with the curve~$n+1$, and to the right of the second copy through a second additional crossing with the curve~$n+1$, creating two additional crossings in total, so that the ordering of curves on the right boundary becomes~$1,\ldots,n+1$, as desired.
One can easily verify the following properties inductively:
\begin{enumerate}[label=(\roman*),leftmargin=8mm]
\item The total number of crossings in~$D_n$ is $2^{n-1}$.
In particular, the graph of~$D_n$ has an even number of vertices.
\item The subgraph of~$D_n$ given by curve~1 and curve~2 contains all crossings, and exactly one pair of parallel edges between any two consecutive crossings.
Every second crossing involves only the curves~1 and~2, i.e., the degree of the corresponding vertices is~4, and these vertices form an independent set.
\item Exactly two of the crossings in~$D_n$ involve all $n$ curves, i.e., the degree of the corresponding vertices is~$2n$, and the clockwise cyclic orderings of the curves around the two crossings are $n-1,1,2,\ldots,n-2,n,n-2,\ldots,2,1,n-1,n$ and $n,1,2,\ldots,n-1,n,n-1,\ldots,2,1$, respectively.
\end{enumerate}
From~(ii) we see in particular that $D_n$ has a perfect matching and a Hamilton cycle, i.e., it is \emph{not} a counterexample to Conjecture~\ref{conj:pruesse-ruskey}.
However, for $n\geq 4$ we can modify~$D_n$ to obtain a diagram~$D_n^*$ as follows; see Figure~\ref{fig:lense3}:
From the first crossing involving all $n$ curves, we slightly shift down the curve~$n-1$, and from the second crossing involving all $n$ curves, we slightly shift up the curve~$n-1$, so that each of these two crossings is split into two, one of degree~4 and the other of degree~$2n-2$ (this is possible because of~(iii)).
By~(i), the number of vertices of~$D_n^*$ is $2^{n-1}+2$, an even number.
Furthermore, in the graph of~$D_n^*$, let $U$ be the independent set of all degree~4 vertices and let $\ol{U}$ be its complement.
From~(ii) we obtain that $|U|=2^{n-1}/2+2=2^{n-2}+2$ and $|\ol{U}|=2^{n-1}/2=2^{n-2}$, and consequently $|U|>|\ol{U}|$.
This inequality and the fact that~$U$ is an independent set shows that $D_n^*$ does not have a perfect matching.
This is the easy direction of Tutte's theorem.
\end{proof}

\section{Open questions}

Computationally, it could be interesting to find all non-simple Venn diagrams for $n=4$ or $n=5$ curves and analyze their properties.

Also, what can be said about the asymptotic number of $n$-Venn diagrams as $n$ grows, or about the proportion of diagrams with certain properties (recall Figure~\ref{fig:6prop})?

There is a large number of other intriguing problems on Venn diagrams, concerning symmetry, convexity, number of crossings, restrictions on the allowed shapes of the curves (triangles, rectangles etc.), to be found in the survey~\cite{MR1668051}.
For example, is there a simple $n$-Venn diagram with rotational symmetry made of $n$ congruent curves for every prime number~$n$ (see \cite{MR2034416,MR2114190,MR3231031})?

\bibliographystyle{alpha}
\bibliography{refs}

\end{document}